\begin{document}

\def\COMMENT#1{}
\let\COMMENT=\footnote% COMMENT OUT for clean output

\newtheorem{theorem}{Theorem}
\newtheorem{lemma}[theorem]{Lemma}
\newtheorem{corollary}[theorem]{Corollary}
\newtheorem{proposition}[theorem]{Proposition}
\newtheorem{conjecture}[theorem]{Conjecture}
\newtheorem{definition}[theorem]{Definition}
\newtheorem{claim}[theorem]{Claim}
\newtheorem{optproblem}[theorem]{Problem}

\newenvironment{claimproof}[1]{\par\noindent\underline{Proof:}\space#1}{\leavevmode\unskip\penalty9999 \hbox{}\nobreak\hfill\quad\hbox{$\blacksquare$}}

\def\eps{{\varepsilon}}
\newcommand{\cP}{\mathcal{P}}
\newcommand{\cT}{\mathcal{T}}
\newcommand{\cL}{\mathcal{L}}
\newcommand{\ex}{\mathbb{E}}
\newcommand{\eul}{e}
\newcommand{\pr}{\mathbb{P}}
\newcommand{\Capa}{\mathop{\mathrm{Cap}}}
\newcommand{\opt}{\mathop{\textsc{opt}}}
\newcommand{\feas}{\mathop{\textsc{feas}}}

\newcommand{\hide}[1]{}
\renewcommand{\chi}{\sigma}
\newcommand{\I}[1]{{\mathbbm #1}}
% Comments for communicating with coauthors

\newif\ifnotesw\noteswtrue
\newcommand{\comm}[1]{\ifnotesw $\blacktriangleright$\ {\sf #1}\ 
  $\blacktriangleleft$ \fi}
%\noteswfalse	% turn off marginal notes for now

\title{THE ERD\H{O}S-ROTHSCHILD PROBLEM ON EDGE-COLOURINGS WITH FORBIDDEN MONOCHROMATIC CLIQUES}
\title[Edge-colourings with forbidden monochromatic cliques]{The Erd\H{o}s-Rothschild problem on edge-colourings with forbidden monochromatic cliques}

\author[O.~Pikhurko, K.~Staden and Z.~B.~Yilma]{OLEG PIKHURKO\thanks{O.P.\ was supported by ERC
grant~306493 and EPSRC grant~EP/K012045/1.}\enskip \nobreakand\ KATHERINE STADEN\thanks{K.S.\ was supported by ERC
grant~306493.}\\
  Mathematics Institute and DIMAP, University of Warwick, Coventry, CV\textup{4} \textup{7}AL\addressbreak
  e-mail\textup{: \texttt{$\lbrace$o.pikhurko,k.l.staden$\rbrace$@warwick.ac.uk}}
  \nextauthor ZELEALEM B.~YILMA\\
  Carnegie Mellon University Qatar, Doha, Qatar\addressbreak
  e-mail\textup{: \texttt{zyilma@qatar.cmu.edu}}}

\receivedline{Received \textup{18} May \textup{2016}}

\maketitle

\begin{abstract}
Let $\bm{k} := (k_1,\ldots,k_s)$ be a sequence of natural numbers.
For a graph $G$, let $F(G;\bm{k})$ denote the number of colourings of the edges of $G$ with colours $1,\dots,s$ such that, for every  $c \in \{1,\dots,s\}$,
the edges of colour $c$ contain no clique of order $k_c$. 
Write $F(n;\bm{k})$ to denote the maximum of $F(G;\bm{k})$ over all graphs $G$ on $n$ vertices.
This problem was first considered by Erd\H{o}s and Rothschild in 1974,
%~\cite{ER,ER2}, 
but it has been solved only for a very small number of non-trivial cases.
%~\cite{abks,PY,yuster}.

We prove that, for every $\bm{k}$ and $n$, there is a complete multipartite graph $G$ on $n$ vertices with $F(G;\bm{k}) = F(n;\bm{k})$.
Also, for every $\bm{k}$ we construct a finite optimisation problem whose maximum is equal to the limit of $\log_2 F(n;\bm{k})/{n\choose 2}$
as $n$ tends to infinity.
Our final result is a stability theorem for complete multipartite graphs $G$, describing the asymptotic structure of such $G$ with $F(G;\bm{k}) = F(n;\bm{k}) \cdot 2^{o(n^2)}$ in terms of solutions to the optimisation problem.
\end{abstract}

\section{Introduction and results}

Let a sequence $\bm{k} = (k_1,\ldots,k_s) \in \mathbb{N}^s$ of natural numbers be given. By an \emph{$s$-edge-colouring} (or \emph{colouring} for brevity)
of a graph $G=(V,E)$ we mean a function $\chi:E\to [s]$, where we denote $[s]:=\{1,\dots,s\}$. Note that we do not require colourings to be proper, that is, adjacent edges can have the same colour. A colouring $\chi$ of $G$ is called $\bm{k}$-\emph{valid} if, for every $c\in [s]$, the colour-$c$ subgraph $\chi^{-1}(c)$ contains no copy of $K_{k_c}$, the complete graph of order $k_c$. 
Write $F(G; \bm{k})$ for the number of $\bm{k}$-valid colourings of $G$.

In this paper, we investigate  $F(n;\bm{k})$, the maximum of $F(G;\bm{k})$ over all graphs $G$ on $n$ vertices, and the \emph{$\bm{k}$-extremal} graphs, i.e.~order-$n$ graphs which attain this maximum. We assume throughout the paper that $s\ge 2$ and that $k_c \geq 3$ for all $c \in [s]$ (since $k_c=2$ just forbids
colour $c$ and the problem reduces to one with $s-1$ colours).

\subsection{Previous work}

The problem, namely the case when $k_1 = \ldots = k_s =: k$, was first considered by Erd\H{o}s and Rothschild in 1974 (see~\cite{ER,ER2}). 
Clearly, any colouring of a $K_{k}$-free graph is $\bm{k}$-valid.
By Tur\'an's theorem~\cite{turan}, the maximum such graph on $n$ vertices is $T_{k-1}(n)$, the complete $(k-1)$-partite graph with parts as equal as possible.
This implies the trivial lower bound
\begin{equation}\label{trivial}
F(n;(k,\dots,k)) \geq s^{t_{k-1}(n)},
\end{equation}
where $t_{k-1}(n)$ is the number of edges in $T_{k-1}(n)$.
In particular, Erd\H{o}s and Rothschild conjectured that, when $\bm{k}=(3,3)$ and $n$ is sufficiently large, the trivial lower bound~(\ref{trivial}) is in fact tight  and, furthermore,  $T_{2}(n)$ is the unique
$\bm{k}$-extremal graph.
The  conjecture was verified for all $n \geq 6$ by Yuster~\cite{yuster} (who also computed $F(n;(3,3))$ for smaller $n$).
Yuster generalised the conjecture to $\bm{k}=(k,k)$ and proved an asymptotic version.
The full conjecture for all $k \geq 3$ was proved by  Alon, Balogh, Keevash and Sudakov~\cite{abks} who further showed that an analogous result holds for three colours:

\begin{theorem}[Alon, Balogh, Keevash and Sudakov~\cite{abks}]\label{alonetal}
Let $k,n \in \mathbb{N}$ where $k \geq 3$ and $n \geq n_0(k)$.
Then
\[
F(n;(k,k)) = 2^{t_{k-1}(n)} \ \text{ and } \ F(n;(k,k,k)) = 3^{t_{k-1}(n)}.
\]
Moreover, $T_{k-1}(n)$ is the unique extremal graph in both cases.\hfill$\square$
\end{theorem}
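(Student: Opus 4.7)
The lower bound $F(n;(k,\ldots,k))\ge s^{t_{k-1}(n)}$ is immediate from~(\ref{trivial}), since every $s$-edge-colouring of $T_{k-1}(n)$ is vacuously $\bm{k}$-valid. My plan is therefore to establish the matching upper bound together with uniqueness of the Tur\'an graph, via a regularity-plus-stability approach.

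First I would apply Szemer\'edi's regularity lemma to a $\bm{k}$-extremal $G$, obtaining a partition $V_1,\ldots,V_t$ whose dense $\eps$-regular pairs form the edges of a reduced graph $R$. To each valid colouring $\chi$ of $G$ I would associate a \emph{pattern} that assigns to each edge $ij\in R$ the set $P(ij)\subseteq[s]$ of colours appearing with density bounded below on the pair $(V_i,V_j)$. The counting lemma then enforces the following structural constraint: for each $c\in[s]$, the subgraph of $R$ consisting of those edges $ij$ with $c\in P(ij)$ must be $K_{k_c}$-free, since otherwise $G$ would contain a monochromatic $K_{k_c}$ in colour $c$.

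The number of valid colourings producing a fixed pattern is at most $\prod_{ij\in R}|P(ij)|^{|V_i|\,|V_j|}\cdot 2^{o(n^2)}$, so the problem reduces to the finite optimisation of maximising $\sum_{ij}|V_i|\,|V_j|\log|P(ij)|$ over patterns meeting the above constraint. The heart of the proof is to show that for $s\in\{2,3\}$ this maximum is attained by taking $R$ to be a balanced $T_{k-1}(t)$ with $P(ij)=[s]$ on every edge, yielding the bound $s^{t_{k-1}(n)(1+o(1))}$. This is a genuinely case-specific convexity (or Lagrangian) calculation; the reason the same strategy cannot give the trivial lower bound for $s\ge 4$ is precisely that hybrid patterns, using several colours per pair with each colour supported on a sparser subgraph, can beat the $(k-1)$-partite Tur\'an allocation.

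Combining this asymptotic bound with the Erd\H{o}s--Simonovits stability theorem would show that $G$ differs from $T_{k-1}(n)$ in at most $o(n^2)$ edges. To upgrade to the exact equality $G=T_{k-1}(n)$, I would use a symmetrisation / vertex-relocation argument: iteratively move any vertex whose neighbourhood deviates from an optimal $(k-1)$-partite structure, showing that each move non-decreases $F(G;\bm{k})$ and eventually forces $G$ to be complete multipartite, after which a direct edge-count pins down the balanced Tur\'an partition. The main obstacles are the finite optimisation in the counting step (particularly for $s=3$, where the space of admissible patterns is much richer and near-optima must be ruled out), and the clean-up step, which demands a sufficiently quantitative stability estimate so that the exponential gain from approaching $T_{k-1}(n)$ dominates the additive error inherited from the regularity lemma.
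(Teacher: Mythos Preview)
The paper does not actually prove Theorem~\ref{alonetal}: it is quoted from~\cite{abks} and closed with a $\square$, and the only commentary is the remark that the proof in~\cite{abks} uses Szemer\'edi's Regularity Lemma. So there is no ``paper's own proof'' to compare against here.

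That said, your outline is a faithful sketch of the Alon--Balogh--Keevash--Sudakov argument: regularity plus counting to reduce to a finite pattern optimisation, a convexity/Tur\'an calculation to pin down the optimum for $s\in\{2,3\}$, stability to get close to $T_{k-1}(n)$, and a vertex-by-vertex cleanup to finish exactly. The paper even revisits the key optimisation step in Section~\ref{conclusion}, showing how the inequality $\sum_\ell \ell d_\ell \le s(1-1/(k-1))$ (derived there via Tur\'an's theorem on the blow-up) is enough to determine $Q(\bm{k})$ when $s\in\{2,3\}$; this is the ``case-specific convexity calculation'' you allude to, and it confirms that your plan is on the right track. The present paper's machinery (Theorems~\ref{zykov} and~\ref{optred}) gives an alternative route to the asymptotic statement $F(n;(k,\dots,k))=s^{t_{k-1}(n)+o(n^2)}$ for $s\le 3$, but does not redo the exact/uniqueness part, which still requires the stability-plus-cleanup argument you describe.
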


The proof of Theorem~\ref{alonetal} uses Szemer\'edi's Regularity Lemma.
Unfortunately, this also means that the graphs to which it applies are very large indeed.
In fact, the assertions are not true for all numbers $n$ of vertices.
As was remarked in~\cite{abks}, the conclusion of Theorem~\ref{alonetal} fails when $k \leq n < s^{(k-2)/2}$, as in this case a random colouring of the edges of $K_n$ with $s$ colours contains no monochromatic $K_k$ with probability more than $1/2$.
Thus, for this range of $n$, we have $F(n;(k,\ldots,k)) > s^{\binom{n}{2}}/2 \geq s^{t_{k-1}(n)}$.

The authors of~\cite{abks} noted that when more than three colours are used, the behaviour of $F(n;(k,\ldots,k))$ changes, making its determination both harder and more interesting. Namely, it was shown in~\cite[page~287]{abks} 
 %on Page~287
that if $s \geq 4$ (and $k \geq 3$) then $F(n;(k,\ldots,k))$ is exponentially larger than $s^{t_{k-1}(n)}$. 
In particular, any extremal graph has to contain many copies of $K_k$.
In the case when $\bm{k} = (3,3,3,3)$, they determined $\log F(n;\bm{k})$ asymptotically by showing that
$
F(n;(3,3,3,3)) = (2^{1/8}3^{1/2})^{n^2+o(n^2)}
$,
where $T_4(n)$ achieves the right exponent.
Similarly, they proved that $F(n;(4,4,4,4)) = (3^{8/9})^{n^2 + o(n^2)}$, where $T_9(n)$ achieves the right exponent.
Determining the exact answer in these two cases, the first and third author of this paper proved that, when $n \geq n_0$,
$T_4(n)$ is the unique $(3,3,3,3)$-extremal graph on $n$ vertices, and $T_9(n)$ is the unique $(4,4,4,4)$-extremal graph on $n$ vertices. 

It was also proved in~\cite[Proposition~5.1]{abks} that the limit 
 \begin{equation}\label{eq:lim}
 F(\bm{k}):=\lim_{n\to\infty} \frac{\log_2 F(n;\bm{k})}{n^2/2}
 \end{equation}
 exists (and is positive) when $\bm{k}=(k,\dots,k)$. As it is easy to see, the proof from~\cite{abks} extends to
an arbitrary fixed sequence $\bm{k}$.

Erd\H{o}s and Rothschild also considered the generalisation of the problem, where
one forbids a monochromatic graph $H$ (the same for each colour). In~\cite{abks} the authors showed that the analogue of Theorem~\ref{alonetal} holds
when $H$ is \emph{colour-critical}, that is, the removal of any edge 
from $H$ reduces its chromatic number.
(Note that every clique is colour-critical.)
In a further generalisation, Balogh~\cite{balogh} considered edge-colourings which themselves do not contain a specific colouring of a fixed graph $H$.
Other authors have addressed this question in the cases of forbidden monochromatic matchings, stars, paths, trees and some other graphs in~\cite{hkl2,hkl3}, matchings with a prescribed colour pattern in~\cite{hl}, and rainbow stars in~\cite{hlos}.
Extending work in~\cite{hl}, Benevides, Hoppen and Sampaio considered forbidden cliques with a prescribed colour pattern, and using techniques similar to our own, obtained several results in this direction, including a version of Theorem~\ref{zykov} below.
In~\cite{hkl}, a colouring version of the Erd\H{o}s-Ko-Rado theorem for families of $\ell$-intersecting $r$-element subsets of an $n$-element set was considered; that is, one counts the number of colourings of families of $r$-sets such that every colour class is $\ell$-intersecting.
A so-called `$q$-analogue' was addressed in~\cite{qana}, which considers a colouring version of the Erd\H{o}s-Ko-Rado theorem in the context of vector spaces over a finite field $GF(q)$.

Alon and Yuster~\cite{alonyuster} studied a directed version of the problem, to determine the maximum number of $T$-free orientations of an $n$-vertex graph, where $T$ is a given $k$-vertex tournament. They showed that the answer is $2^{t_{k-1}(n)}$
for $n\ge n_0(k)$.
This in fact answers the original question of Erd\H{o}s~\cite{ER}, which he modified to ask about edge-colourings.

The problem of counting $H$-free edge-colourings in hypergraphs was studied in~\cite{hkl,lprs,lps}.
In an asymptotic hypergraph version of Theorem~\ref{alonetal}, Lefmann, Person and Schacht~\cite{lps} proved that, for every $k$-uniform hypergraph $H$ and $s \in \lbrace 2,3 \rbrace$, the maximum number of $H$-free $s$-edge-colourings over all $k$-uniform hypergraphs with $n$ vertices is $s^{\mathrm{ex}(n,H)+o(n^k)}$, where the Tur\'an function $\mathrm{ex}(n,H)$ is the maximum number of edges in an $H$-free $k$-uniform hypergraph on $n$ vertices.
This is despite the fact that $\mathrm{ex}(n,H)$ is known only for few $H$.

\subsection{New results}

Our first result states that it suffices to consider very special graphs $G$  in order to determine the value of $F(n;\bm{k})$:

\begin{theorem}\label{zykov}
For every $n,s \in \mathbb{N}$ and $\bm{k} \in \mathbb{N}^s$, at least
one of the 
$\bm{k}$-extremal graphs of order $n$ is complete multipartite.
\end{theorem}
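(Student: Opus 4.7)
The plan is to use a Zykov-type symmetrization driven by a Cauchy--Schwarz inequality. For non-adjacent vertices $u,v\in V(G)$, define $G_{u\to v}$ to be the graph on the same vertex set obtained from $G$ by replacing $u$'s neighbourhood with $N_G(v)$; in $G_{u\to v}$, $u$ and $v$ are false twins with common neighbourhood $N_G(v)$, and they remain non-adjacent because $v\notin N_G(v)$. Define $G_{v\to u}$ symmetrically.

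The central inequality is
\[
F(G;\bm{k})^2 \;\le\; F(G_{u\to v};\bm{k})\cdot F(G_{v\to u};\bm{k}).
\]
To prove it, fix a $\bm{k}$-valid colouring $\psi$ of $G-\{u,v\}$, and let $a(\psi)$ (respectively $b(\psi)$) be the number of ways to colour the edges incident to $u$ (respectively $v$) in $G$ so that, together with $\psi$, no monochromatic $K_{k_c}$ arises through $u$ (respectively $v$). Because $uv\notin E(G)$, no monochromatic clique can contain both $u$ and $v$, so these two extension problems are independent, giving $F(G;\bm{k})=\sum_\psi a(\psi)b(\psi)$. In $G_{u\to v}$, both $u$ and $v$ have neighbourhood $N_G(v)$ and are still non-adjacent, so the same reasoning gives $F(G_{u\to v};\bm{k})=\sum_\psi b(\psi)^2$, and symmetrically $F(G_{v\to u};\bm{k})=\sum_\psi a(\psi)^2$. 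Cauchy--Schwarz then yields the claim.

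Now choose a $\bm{k}$-extremal graph $G$ of order $n$ that maximises $\Phi(G):=\sum_{w\in V(G)}d_G(w)^2$ among all such extremal graphs. Suppose for contradiction that $G$ is not complete multipartite. Then non-adjacency on $V(G)$ fails to be transitive, so there exist non-adjacent $u,v$ with $N_G(u)\ne N_G(v)$. The key inequality combined with $F(G;\bm{k})=F(n;\bm{k})\ge F(G_{u\to v};\bm{k}),\,F(G_{v\to u};\bm{k})$ forces equality throughout, so both $G_{u\to v}$ and $G_{v\to u}$ are also extremal. A short calculation tracking degree changes (the degree of $u$ shifts by $d_G(v)-d_G(u)$, while each $w\in N_G(u)\triangle N_G(v)$ has its degree change by $\pm 1$) shows that
\[
\bigl(\Phi(G_{u\to v})-\Phi(G)\bigr) + \bigl(\Phi(G_{v\to u})-\Phi(G)\bigr) \;=\; 2\,|N_G(u)\triangle N_G(v)| \;>\; 0,
\]
so at least one of $\Phi(G_{u\to v}),\Phi(G_{v\to u})$ strictly exceeds $\Phi(G)$, contradicting the maximality of $G$. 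Hence $G$ is complete multipartite, which proves the theorem.

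The main technical point is setting up the Cauchy--Schwarz step correctly: it hinges on the fact that non-adjacency of $u$ and $v$ makes the monochromatic-clique constraints at these two vertices factor independently given the colouring of $G-\{u,v\}$. Once that factorisation is in hand, the degree-squared potential $\Phi$ supplies a clean, non-iterative termination argument, bypassing the subtleties of iterating symmetrizations.
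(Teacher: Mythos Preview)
Your proof is correct and rests on the same core idea as the paper's: for non-adjacent $u,v$, the $\bm{k}$-valid extensions of any colouring of $G-\{u,v\}$ to $u$ and to $v$ are independent (since no clique can use both), giving $F(G;\bm{k})=\sum_\psi a(\psi)b(\psi)$ and $F(G_{u\to v};\bm{k})=\sum_\psi b(\psi)^2$, $F(G_{v\to u};\bm{k})=\sum_\psi a(\psi)^2$. The paper then uses $\sum_\psi (a-b)^2\ge 0$, i.e.\ $2F(G)\le F(G_{u\to v})+F(G_{v\to u})$, whereas you use Cauchy--Schwarz, $F(G)^2\le F(G_{u\to v})F(G_{v\to u})$; combined with extremality, both force equality and hence extremality of the two symmetrised graphs, so this difference is purely cosmetic.

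Where the arguments genuinely diverge is in termination. The paper iterates the symmetrisation and tracks an ad hoc potential (the total size of ``frozen'' twin-classes plus the largest unfrozen one), arguing it strictly increases at every step. Your choice of $\Phi(G)=\sum_w d_G(w)^2$ is cleaner: the identity
\[
\bigl(\Phi(G_{u\to v})-\Phi(G)\bigr)+\bigl(\Phi(G_{v\to u})-\Phi(G)\bigr)=2\,|N_G(u)\triangle N_G(v)|
\]
(which checks out, the cross-terms in the two differences cancel exactly) lets you pick a $\Phi$-maximiser among extremal graphs and get an immediate contradiction, with no iteration to manage. This buys a shorter and more transparent endgame; the paper's potential, on the other hand, makes the iterative process explicit and bounds its length by~$n$.
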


Our second result (Theorem~\ref{optred} below) writes the limit in~\eqref{eq:lim} as the value of a certain optimisation problem.
% In order to state some of our later results, we need a few versions of the problem.

\medskip
%\begin{optproblem}\label{ProblemStar}
\noindent\textbf{Problem~$Q_t$}:
 \it
 Given a sequence $\bm{k} := (k_1,\ldots,k_s) \in \mathbb{N}^s$ of natural numbers and $t\in\{0,1,2\}$, determine
\begin{equation}\label{Qdef}
Q_t(\bm{k}) := \max_{(r,\phi,\bm{\alpha}) \in \feas_t(\bm{k})} q(r,\phi,\bm{\alpha}),
\end{equation}
 the maximum value of
 \begin{equation}\label{q}
 q(r,\phi,\bm{\alpha}) := 2\sum_{\stackrel{1 \leq i < j \leq r}{\phi(ij) \neq \emptyset}} \alpha_i \alpha_j \log_2 |\phi(ij)|
 \end{equation}
 over the set $\feas_t(\bm{k})$ of \emph{feasible solutions}, that is, triples
$(r,\phi,\bm{\alpha})$ such that
 %satisfying all of the following constraints:
 \begin{itemize}
  \item $r \in \mathbb{N}$ and $r < R(\bm{k})$, where $R(\bm{k})$ is the \emph{Ramsey number} of $\bm k$ (i.e.\ the minimum $R$ such that $K_R$ admits no $\bm k$-valid
  $s$-edge-colouring);
  \item $\phi \in \Phi_t(r;\bm{k})$, where $\Phi_t(r;\bm{k})$ is the set of all
  functions $\phi : \binom{[r]}{2} \rightarrow 2^{[s]}$
  such that
  \[
  \phi^{-1}(c) := \left\lbrace ij \in \binom{[r]}{2} : c \in \phi(ij) \right\rbrace
  \]
is $K_{k_c}$-free for every colour $c \in [s]$ and $|\phi(ij)|\ge t$
for all $ij\in \binom{[r]}{2}$;
  \item $\bm{\alpha} = (\alpha_1,\ldots,\alpha_r) \in \Delta^r$, where $\Delta^r$ is the set of all $\bm{\alpha} \in \mathbb{R}^r$ with $\alpha_i \geq 0$ for all $i \in [r]$, and $\alpha_1 + \ldots + \alpha_r = 1$.
  %We call $\bm{\alpha}$ a \emph{vertex-weighting}.
  \end{itemize}
 \rm\medskip
 %\end{optproblem}

Note that the maximum in~\eqref{Qdef} is attained. Indeed, for each of the finitely many allowed pairs $(r,\phi)$, the function $q(r,\phi,\cdot)$ is continuous
and hence attains its maximum over the non-empty compact set $\Delta^r$.
A triple $(r,\phi,\bm{\alpha})$ is called \emph{$Q_t$-optimal} if it attains the maximum, that is, 
$(r,\phi,\bm{\alpha})\in \feas_t(\bm{k})$ and $q(r,\phi,\bm{\alpha})=Q_t(\bm{k})$.
%We implicitly assume that $\phi$ has no `coloured twins',~i.e.~that for all $ij \in \binom{[r]}{2}$ such that $\phi(ij) = \emptyset$, there is some $k \in [r] \setminus \lbrace i,j \rbrace$ such that $\phi(ik) \neq \phi(jk)$.

As we will show later in Lemma~\ref{lm:Qs}, $Q_0(\bm{k})=Q_1(\bm{k})=Q_2(\bm{k})$ so we will denote this common value by
$Q(\bm{k})$. Of course, if one wishes to determine the value of $Q(\bm{k})$, then one should work with Problem~$Q_2$ as it has the smallest feasible set.
Since one of our results is stated in terms of $Q_1$-optimal triples (which
may be a strict superset of $Q_2$-optimal triples), we stated different versions of the optimisation problem.
In Section~\ref{conclusion}, we explore how one might hope to solve this optimisation problem, and show that all previously obtained (asymptotic) results can be recovered.

First we show that $Q(\bm{k})$ gives rise to an asymptotic lower bound on $F(n;\bm{k})$.

\begin{lemma}\label{lm:lowerbd}
For every $s \in \mathbb{N}$ and $\bm{k} \in \mathbb{N}^s$, there exists $C$ such that for all $n \in \mathbb{N}$ there is a graph $G$ on $n$ vertices with $F(G;\bm{k}) \geq 2^{Q(\bm{k})\binom{n}{2} - Cn}$.
%Moreover, we can choose $G$ to be complete multipartite.
\end{lemma}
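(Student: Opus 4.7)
The strategy is a direct construction from a $Q_0$-optimal triple $(r,\phi,\bm{\alpha})\in\feas_0(\bm{k})$, which exists because the maximum defining $Q_t(\bm{k})$ is attained and $Q_0(\bm{k})=Q(\bm{k})$. I will blow up $\phi$ according to $\bm{\alpha}$ to obtain both the graph $G$ and a large family of $\bm{k}$-valid colourings.

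Explicitly, partition $[n]=V_1\cup\dots\cup V_r$ with $|V_i|=\lfloor\alpha_i n\rfloor$ for $i<r$ and $|V_r|$ absorbing the remainder, so that $|V_i|=\alpha_i n + O(1)$. Let $G$ be the graph containing a complete bipartite graph between $V_i$ and $V_j$ whenever $\phi(ij)\neq\emptyset$, and no other edges. The colourings to be counted are those obtained by independently assigning to each edge between $u\in V_i$ and $v\in V_j$ some colour from $\phi(ij)$. This yields
$$\prod_{\substack{1\leq i<j\leq r\\ \phi(ij)\neq\emptyset}} |\phi(ij)|^{|V_i||V_j|}$$
distinct colourings of $G$.

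Each such colouring is $\bm{k}$-valid: for every $c\in[s]$, the colour-$c$ subgraph is contained in the blow-up of $\phi^{-1}(c)$ obtained by replacing vertex $i\in[r]$ with the independent set $V_i$. Any $K_{k_c}$ inside this blow-up must occupy $k_c$ distinct parts and hence project to a $K_{k_c}$ in $\phi^{-1}(c)$, which is forbidden by $\phi\in\Phi_0(r;\bm{k})$.

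Finally, taking logarithms and using $|V_i||V_j|=\alpha_i\alpha_j n^2+O(n)$ together with $\log_2|\phi(ij)|\leq\log_2 s$, one finds
$$\log_2 F(G;\bm{k}) \;\geq\; \sum_{\substack{i<j\\\phi(ij)\neq\emptyset}}|V_i||V_j|\log_2|\phi(ij)| \;\geq\; \tfrac{n^2}{2}\,q(r,\phi,\bm{\alpha}) - C'n \;=\; \tfrac{n^2}{2}Q(\bm{k}) - C'n$$
for a constant $C'=C'(\bm{k})$. Since $Q(\bm{k})\binom{n}{2}=\tfrac{n^2}{2}Q(\bm{k})-\tfrac{n}{2}Q(\bm{k})$ and $Q(\bm{k})\geq 0$, this already exceeds $Q(\bm{k})\binom{n}{2}-Cn$ with the choice $C=C'$. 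There is no substantive obstacle to this plan; the only care needed is handling the $O(1)$ integrality slack in the part sizes and the (standard) blow-up check for $\bm{k}$-validity.
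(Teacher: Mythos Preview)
Your proof is correct and follows essentially the same approach as the paper: take a $Q_0$-optimal triple, blow it up to an $n$-vertex graph with parts of size $\alpha_i n + O(1)$, and count the colourings obtained by choosing each edge's colour from its list $\phi(ij)$. You supply a little more detail on the $\bm{k}$-validity check and the final arithmetic, but the argument is the same.
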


\begin{proof}
Let $(r,\phi,\bm{\alpha})$ be $Q_0$-optimal.
For  $n \in \mathbb{N}$, let $G_{\phi,\bm{\alpha}}(n)$ be the graph of order $n$ with vertex partition $X_1,\ldots,X_r$, where $|\,|X_i| - \alpha_i n| \leq 1$; and in which for all $i,j \in [r]$ and $x_i \in X_i$ and $y_j \in X_j$, we have that $x_iy_j$ is an edge of $G_{\phi,\bm{\alpha}}(n)$ if and only if $i \neq j$ and $\phi(ij) \neq \emptyset$.
Consider those colourings of $G_{\phi,\bm{\alpha}}(n)$ in which $x_iy_j$ is coloured with some colour in $\phi(ij)$, for every $x_i \in X_i$, $y_j \in X_j$, where $1 \leq i < j \leq r$.
Every such colouring is $\bm{k}$-valid because $\phi^{-1}(c)$ is $K_{k_c}$-free for all $c \in [s]$.
The number of such colourings gives the desired lower bound for $F(n;\bm{k})$:
\begin{equation}\label{lowerbd}
F(n;\bm{k}) \geq F(G_{\phi,\bm{\alpha}}(n);\bm{k}) \geq \prod_{\stackrel{1 \leq i < j \leq r}{\phi(ij) \neq \emptyset}}|\phi(ij)|^{|X_i|\,|X_j|}  \geq 2^{Q(\bm{k})\binom{n}{2}-Cn},
\end{equation}
where $C = C(\bm{k})$ is a constant due to rounding.
%Finally, observe that $G_{\phi,\bm{\alpha}}(n)$ is complete $r$-partite if $(r,\phi,\bm{\alpha})$ is $Q_1$-optimal.
\end{proof}

%Note that this asymptotic could be improved; we do not know if there is some constant $C$ such that $F(n;\bm{k}) \leq 2^{C Q(\bm{k})\binom{n}{2}}$.

%The next theorem shows that in fact $F(n;\bm{k})$ is not much larger than the lower bound in Lemma~\ref{lm:lowerbd}.

\begin{theorem}\label{optred}
For every $s\in \mathbb{N}$ and $\bm{k} \in \mathbb{N}^s$, 
we have $F(n;\bm{k})=2^{Q(\bm{k}){n\choose 2}+o(n^2)}$, that is, $F(\bm{k})=Q(\bm{k})$, where $F(\bm{k})$ is the limit in~\eqref{eq:lim}.
%$$\lim_{n \rightarrow \infty} \frac{\log_2 F(n;\bm{k})}{\binom{n}{2}} = Q(\bm{k}).$$
\end{theorem}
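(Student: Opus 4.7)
The lower bound $F(\bm{k}) \ge Q(\bm{k})$ is exactly Lemma~\ref{lm:lowerbd}, so the remaining task is the matching upper bound $F(n;\bm{k}) \le 2^{Q(\bm{k})\binom{n}{2}+o(n^2)}$. My plan is to apply a multicolour version of Szemer\'edi's Regularity Lemma: to each $\bm{k}$-valid edge-colouring $\chi$ of each $n$-vertex graph $G$ I would attach a (nearly) feasible triple $(R,\phi_\chi,\bm{\alpha})$ for Problem $Q_0$, and then bound the number of $\chi$ giving rise to the same triple by roughly $2^{q(R,\phi_\chi,\bm{\alpha})\binom{n}{2}}$.

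Concretely, fixing $0<\delta\ll\eps$ (both tending to zero after $n\to\infty$) and applying the regularity lemma simultaneously to the $s$ colour classes of $\chi$ yields a partition $V_0\cup V_1\cup\cdots\cup V_R$ with $|V_0|\le\eps n$, equal-sized $V_1,\ldots,V_R$, and $R$ bounded in terms of $\eps$. For each pair $1\le i<j\le R$, set
\[
\phi_\chi(ij):=\{\,c\in[s]:(V_i,V_j)\text{ is }\eps\text{-regular for }\chi^{-1}(c)\text{ and }d_c(V_i,V_j)\ge\delta\,\}
\]
together with $\alpha_i:=|V_i|/n$. The crucial observation is that $\phi_\chi\in\Phi_0(R;\bm{k})$: if $\phi_\chi^{-1}(c)$ contained a $K_{k_c}$ on some $V_{i_1},\ldots,V_{i_{k_c}}$, then the counting lemma for regular pairs would embed a monochromatic $K_{k_c}$ in $\chi^{-1}(c)$, contradicting $\bm{k}$-validity. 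To count $\bm{k}$-valid colourings of a fixed type $(V_1,\ldots,V_R,\phi)$, edges meeting $V_0$ or lying in irregular pairs total $O(\eps n^2)$ and contribute a multiplicative factor of $2^{o(n^2)}$; on each regular pair at most $s\delta|V_i||V_j|$ edges may be coloured outside $\phi(ij)$ (by definition of $\phi_\chi$), and a standard entropy/choice bound gives at most $2^{o(|V_i||V_j|)}\,|\phi(ij)|^{|V_i||V_j|}$ colourings there. Using $|V_i||V_j|=\alpha_i\alpha_j n^2$ and multiplying over pairs,
\[
\#\{\chi:\phi_\chi=\phi\}\le 2^{\,q(R,\phi,\bm{\alpha})\binom{n}{2}+o(n^2)}.
\]
There are only $R^n\cdot 2^{sR^2}=2^{o(n^2)}$ possible types, so $F(G;\bm{k})\le\max_{(R,\phi,\bm{\alpha})} 2^{q(R,\phi,\bm{\alpha})\binom{n}{2}+o(n^2)}$.

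The hard part is to reconcile this bound with $Q(\bm{k})$, since Problem $Q_0$ takes a maximum over $r<R(\bm{k})$ whereas the $R$ produced by the regularity lemma can exceed $R(\bm{k})$. I would resolve this via Lemma~\ref{lm:Qs} (which asserts $Q_0=Q_1=Q_2$), because for Problem $Q_1$ the restriction $r<R(\bm{k})$ is automatic: any $\phi\in\Phi_1(r;\bm{k})$ with $r\ge R(\bm{k})$ would, by selecting one colour from each non-empty $\phi(ij)$, yield a $\bm{k}$-valid $s$-edge-colouring of $K_r$, which is impossible. The final step is to verify that the supremum of $q(r,\phi,\bm{\alpha})$ over feasible triples with \emph{arbitrary} $r$ coincides with $Q_1(\bm{k})$; this amounts to a short compression argument showing that the ``active'' subgraph $\{ij:\phi(ij)\ne\emptyset\}$ of any feasible triple is $K_{R(\bm{k})}$-free, so that any triple with $r\ge R(\bm{k})$ may be reduced to one on fewer parts without decreasing $q$ (for example by deleting parts whose link is empty and by merging parts with identical link functions after rebalancing $\bm{\alpha}$). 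Combining with the lower bound from Lemma~\ref{lm:lowerbd} then yields $F(\bm{k})=Q(\bm{k})$.
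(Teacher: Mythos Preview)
Your approach is correct in outline and is in fact the route the paper explicitly acknowledges in its concluding remarks as an alternative: apply the multicolour Regularity Lemma to an arbitrary extremal graph, extract a feasible $(R,\phi,\bm{\alpha})$ with $R$ possibly huge, and only afterwards symmetrise the \emph{triple} down to fewer than $R(\bm{k})$ parts. The paper instead reverses the order: it first invokes Theorem~\ref{zykov} to replace $G$ by a complete $r$-partite graph (so $r<R(\bm{k})$ is forced from the outset, else there are no valid colourings), and only then applies regularity via Lemma~\ref{mainlem}. What the paper's order buys is twofold: it gives an explicit (if poor) rate of convergence in~\eqref{eq:lim}, and more importantly it yields Theorem~\ref{th:PartiteStructure} essentially for free, since the regularity analysis is performed with respect to the actual part structure of~$G$.

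One genuine wrinkle in your write-up: the compression you sketch, ``deleting parts whose link is empty and merging parts with identical link functions'', does not do the job. There is no reason two parts should have identical links, and merging would discard any contribution from $\phi(ij)$ itself. The correct reduction is precisely the weight-shifting move from Lemma~\ref{lm:Qs}: whenever $|\phi(ij)|\le 1$ (such a pair exists while $r\ge R(\bm{k})$, by your own observation that the active graph is $K_{R(\bm{k})}$-free), the map $c\mapsto q(r,\phi,(\alpha_1,\ldots,\alpha_i+c,\ldots,\alpha_j-c,\ldots))$ is linear, so pushing all weight to one endpoint does not decrease $q$ and drops $r$ by one. Iterating until every surviving pair has $|\phi(ij)|\ge 2$ lands you in $\feas_2(\bm{k})$ with $r<R(\bm{k})$. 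With this fix your argument goes through.
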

%Note that this asymptotic could be improved; we do not know if there is some constant $C$ such that $F(n;\bm{k}) \leq 2^{C Q(\bm{k})\binom{n}{2}}$.

So, as in the result of Lefmann, Person and Schacht~\cite{lps} mentioned above, this theorem can be proved without knowledge of $Q(\bm{k})$.
Our proof of Theorem~\ref{optred} builds upon the techniques of~\cite{abks,PY} and also uses the Regularity Lemma.

The structure of an arbitrary order-$n$ graph $G$ with $F(G;\bm{k})=2^{(Q(\bm{k})+o(1))n^2/2}$
can be rather complicated (see a short discussion in Section~\ref{conclusion} of the case $\bm{k}=(4,3)$). However, the next result states that if $G$ is assumed to be complete multipartite, then the part ratios have to be close to
being $Q_1$-optimal.

\begin{theorem}\label{th:PartiteStructure} For every $\delta>0$ there are $\eta>0$ and $n_0$
such that if $G=(V,E)$ is a complete multipartite graph of order $n\ge n_0$ with (non-empty) parts $V_1,\dots,V_r$ and $F(G;\bm{k})\ge 2^{(Q(\bm{k})-\eta)n^2/2}$ then there is a $Q_1$-optimal triple $(r,\phi,\bm{\alpha'})$ such that the $\ell^1$-distance
between $\bm{\alpha'}\in \Delta^r$ and $\bm{\alpha}=(|V_1|/n,\dots,|V_r|/n)$ is 
at most $\delta$: $\|\bm{\alpha}-\bm{\alpha'}\|_1:=\sum_{i=1}^r|\alpha_i-\alpha_i'|\le \delta$.
\end{theorem}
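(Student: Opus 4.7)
The plan is to prove an upper bound of the form $\log_2 F(G;\bm{k}) \le \psi_r(\bm{\alpha})\cdot n^2/2+o(n^2)$, where $\psi_r(\bm{\beta}):=\max_{\phi\in\Phi_1(r;\bm{k})} q(r,\phi,\bm{\beta})$, sharpening the general bound of Theorem~\ref{optred} by tracking the prescribed partition of $G$, and then to extract stability from a compactness argument on the finite set of admissible $(r,\phi)$. Since $G$ contains $K_r$ as a subgraph (one vertex per part), a $\bm{k}$-valid colouring of $G$ forces $r<R(\bm{k})$, so $r$ is bounded by a constant depending only on $\bm{k}$.

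For the upper bound, the approach is to adapt the regularity-lemma argument behind Theorem~\ref{optred}. Given a $\bm{k}$-valid colouring $\chi$ of $G$, apply the multi-coloured version of Szemer\'edi's regularity lemma to $\chi$ with initial partition $V_1,\ldots,V_r$, obtaining clusters $W_1,\ldots,W_m$ with $W_a\subseteq V_{p(a)}$, $\epsilon$-regular in each colour on all but negligibly many pairs. For a small constant $\gamma>0$, set $\phi^*_\chi(ab):=\{c:d^c_{ab}\ge\gamma\}$ for each refined pair; the counting lemma forces $(\phi^*_\chi)^{-1}(c)$ to be $K_{k_c}$-free, else one produces $\Omega(\prod|W_{a_j}|)$ monochromatic colour-$c$ copies of $K_{k_c}$ in $\chi$. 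The number of $\chi$ reducing to a fixed $\phi^*$ is at most $\prod_{a<b}|\phi^*(ab)|^{|W_a||W_b|}\cdot 2^{o(n^2)}$ by bounding exceptional edges. To pass from the refined data $(m,\phi^*,\bm{\beta})$, with $\beta_a=|W_a|/n$, to prescribed-partition data $(r,\phi,\bm{\alpha})$ with $\alpha_i=|V_i|/n$, one performs a ``collapse'' procedure analogous to the proof of Lemma~\ref{lm:Qs} (which shows $Q_0=Q_1$); the crucial input specific to our situation is that $\phi^*(ab)=\emptyset$ whenever $p(a)=p(b)$ (since $G$ has no intra-part edges), so any collapse avoiding loss of $q$-value must respect $p$, producing some $\phi\in\Phi_1(r;\bm{k})$ with $q(r,\phi,\bm{\alpha})\ge q(m,\phi^*,\bm{\beta})-o(1)$. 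This yields $\log_2 F(G;\bm{k})\le\psi_r(\bm{\alpha})\cdot n^2/2+o(n^2)$.

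The finishing step is a pure compactness argument. The function $\psi_r$ is a maximum of finitely many continuous functions on the compact simplex $\Delta^r$, hence continuous, and is bounded above by $Q(\bm{k})$. Let $A_r:=\{\bm{\beta}\in\Delta^r:\psi_r(\bm{\beta})=Q(\bm{k})\}$, the set of $\bm{\alpha'}$-coordinates of $Q_1$-optimal triples with $r$ parts. By continuity and compactness, for each $\delta>0$ and each $r<R(\bm{k})$ there is $\eta_r(\delta)>0$ such that $\psi_r(\bm{\beta})\ge Q(\bm{k})-\eta_r(\delta)$ forces $A_r\ne\emptyset$ and $\bm{\beta}$ to lie within $\ell^1$-distance $\delta$ of $A_r$. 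Taking $\eta:=\min_{r<R(\bm{k})}\eta_r(\delta)/2$ and $n_0$ sufficiently large, the hypothesis combined with the upper bound produces the required $\bm{\alpha'}\in A_r$, and writing $\bm{\alpha'}\in A_r$ as achieved by some $\phi\in\Phi_1(r;\bm{k})$ furnishes the $Q_1$-optimal triple $(r,\phi,\bm{\alpha'})$. The main technical obstacle is the collapse step: one must ensure that the refined $q$-value is recoverable at the prescribed partition without loss, and that the $\bm{\alpha}$-coordinate of the collapse matches the coarsening dictated by $G$ rather than some other aggregation of $\bm{\beta}$. This is precisely where the specific structure of $G$ --- complete multipartite with prescribed parts $V_1,\ldots,V_r$ --- enters, via the fact that intra-$V_i$ refined pairs carry $\phi^*=\emptyset$ and so cannot contribute to the collapse.
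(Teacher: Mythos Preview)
Your overall architecture --- a regularity-based upper bound of the form $\log_2 F(G;\bm{k})\le \psi_r(\bm{\alpha})\,n^2/2+o(n^2)$ followed by compactness on the finitely many $(r,\phi)$ --- matches the paper exactly; your compactness step is essentially the paper's Claim~\ref{compact}. The difference, and the real gap, is the ``collapse'' step.

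The symmetrisation of Lemma~\ref{lm:Qs} lets you shift weight between two indices $a,b$ with $|\phi^*(ab)|\le 1$ without decreasing $q$. Since $\phi^*(ab)=\emptyset$ whenever $p(a)=p(b)$, you can indeed merge all clusters inside each $V_i$ into a single surviving cluster $a_i$ of weight $\alpha_i$, obtaining $(r,\phi^{**},\bm{\alpha})$ with $\phi^{**}(ij)=\phi^*(a_ia_j)$ and $q(r,\phi^{**},\bm{\alpha})\ge q(m,\phi^*,\bm{\beta})$. But nothing in this process prevents some particular inter-part pair $(a_i,a_j)$ from being one of the (few) irregular pairs, in which case $\phi^{**}(ij)=\emptyset$ and $\phi^{**}\notin\Phi_1(r;\bm{k})$. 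You then cannot simply add a colour to $\phi^{**}(ij)$ (this may create a forbidden $K_{k_c}$), nor can you invoke Lemma~\ref{lm:Qs} again to merge $i$ with $j$ --- that would change $r$ and $\bm{\alpha}$, which is exactly what the theorem forbids. In short, the collapse lands you in $\feas_0(\bm{k})$, not $\feas_1(\bm{k})$, and the example in the concluding remarks (where $\max_{\phi\in\Phi_0}q$ can strictly exceed $\max_{\phi\in\Phi_1}q$ at a fixed $\bm{\alpha}$) shows this distinction matters.

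The paper's Lemma~\ref{mainlem} avoids this by \emph{averaging over transversals} rather than collapsing. With weights $w(\bm{t})=\prod_i(\beta_{t_i}/\alpha_i)$ one checks that $\sum_{\bm{t}}w(\bm{t})\,q(r,\phi_{\bm{t}},\bm{\alpha})$ is at least $2\log_2|\mathcal{C}|/n^2$, while the total weight of ``bad'' transversals (those with some $\phi_{\bm{t}}(ij)=\emptyset$) is $o(1)$ because every bad inter-part cluster pair has all its edges in the exceptional set $E_0$; hence the best \emph{good} transversal already gives $\phi\in\Phi_1(r;\bm{k})$ with the required $q$-value. A second point you have not addressed is small parts: if some $|V_i|\le \eta^2 n$, the regularity lemma with prescribed initial partition cannot be applied as stated, and the bad-transversal weight bound above has a factor $1/\alpha_i^2$ that blows up. The paper handles this by first replacing each small $V_i$ by a single vertex and then refining clusters according to their colour pattern towards these singletons, so that every transversal automatically picks up a nonempty $\phi_{\bm{t}}(ij)$ on pairs involving a small part.
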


In a sense, a converse to Theorem~\ref{th:PartiteStructure} holds.
Indeed, for every $Q_1$-optimal triple $(r,\phi,\bm{\alpha}')$, for all $n \in \mathbb{N}$, the proof of Lemma~\ref{lm:lowerbd} gives a complete $r$-partite graph $G_{\phi,\bm{\alpha}'}(n)$ on $n$ vertices with parts $X_1^n,\ldots,X_r^n$ such that, setting $\bm{\alpha}_n = (|X_1^n|/n,\ldots,|X_r^n|/n)$, we have, as $n \to \infty$, that
\[
\frac{\log_2 F(G_{\phi,\bm{\alpha}'}(n);\bm{k})}{n^2/2} \to Q(\bm{k}) \text{ and } \|\bm{\alpha}_n-\bm{\alpha}'\| \to 0.
\]

%\section{Notation}
 %We adopt the convention that $0 \notin \mathbb{N}$.
%Given $r \in \mathbb{N}$, write $[r] := \lbrace 1,\ldots,r \rbrace$.

\medskip
The rest of the paper is organised as follows. Theorem~\ref{zykov} is proved in Section~\ref{symsec}. Section~\ref{lemma} contains a general lemma which is then used in Section~\ref{OptredProof} to prove Theorems~\ref{optred} and~\ref{th:PartiteStructure}.
Section~\ref{conclusion} contains some concluding remarks. We will use the following notation.
For a set $X$ and an integer $k \leq |X|$, let $\binom{X}{k}$ denote the set of all $k$-subsets of $X$. Also, let $2^{X}$ be the set of all subsets of~$X$.
 %Write $\binom{X}{\leq k} := \bigcup_{i=0}^k\binom{X}{i}$.
If it is clear from the context, we may write $ij$ to denote the set $\lbrace i,j \rbrace$ or the ordered pair $(i,j)$.
%We will often write $ij$ to denote the ordered pair $(i,j)$.

\section{Symmetrisation and $\textbf{k}$-extremal graphs}\label{symsec}

%A graph $G$ on $n$ vertices which satisfies $F(n;\bm{k}) = F(G;\bm{k})$ is said to be \emph{$\bm{k}$-extremal}.
In this section we prove Theorem~\ref{zykov}, which states that, for any instance of the problem (i.e.~any choice of the parameters $n,s,\bm{k}$), there is a complete multipartite graph which is $\bm{k}$-extremal.
The proof uses the well-known symmetrisation method that was introduced by Zykov~\cite{zykov}.

\medskip
\noindent
\emph{Proof of Theorem~\ref{zykov}.}
Let $G=(V,E)$ be a $\bm{k}$-extremal graph on $n$ vertices.
Consider distinct vertices $u,v \in V$ with $uv \not \in E$.
Let $G' = G-\{u,v\}$, where $G-X=G[V\setminus X]$ is the graph obtained from $G$ by removing every vertex of a set  $X\subseteq V$ and every edge adjacent to a vertex of $X$.
For a graph $H$, let $\mathcal{F}(H)$ denote the set of $\bm{k}$-valid colourings of $H$. (Thus $F(H;\bm{k})=|\mathcal{F}(H)|$.)
Let $\chi_u$ and $\chi_v$ denote the number of  $\bm{k}$-valid extensions of $\chi \in \mathcal{F}(G')$ to $G-\lbrace v\rbrace$ and $G- \lbrace u \rbrace$ respectively.
Since $uv \notin E$ and each forbidden graph is a clique, we have that the number of  $\bm{k}$-valid extensions of $\chi$ to $G$ is $\chi_u\chi_v$.
Thus
\begin{equation}\label{zykov1}
F(G;\bm{k}) = \sum_{\chi \in \mathcal{F}(G')} \chi_u \chi_v.
\end{equation}
Let $G_u$ be the graph obtained from $G$ by deleting $v$ and adding a new vertex $u'$ which is a clone of $u$ in $G$.
Define $G_v$ analogously.
From (\ref{zykov1}), it follows that 
\begin{equation}\label{zykov2}
F(G_u;\bm{k}) = \sum_{\chi \in \mathcal{F}(G')} \chi_u^2 \ \ \text{ and } \ \ F(G_v;\bm{k}) = \sum_{\chi \in \mathcal{F}(G')} \chi_v^2 .
\end{equation}
Since $G$ is $\bm{k}$-extremal, we have that
\begin{equation}
0 \leq 2F(G;\bm{k}) - F(G_u;\bm{k}) - F(G_v;\bm{k}) \stackrel{(\ref{zykov1}),(\ref{zykov2})}{=} -\sum_{\chi \in \mathcal{F}(G')} (\chi_u - \chi_v)^2 \leq 0,
\end{equation}
and hence we have equality everywhere.
Therefore $G_u$ and $G_v$ are both $\bm{k}$-extremal. In order to finish
the proof, it is enough to show that we can reach a complete multipartite
graph by starting with $G$ and iteratively performing the above operation. 

We say that two vertices $x$ and $y$ are \emph{twins}
(and write $x \sim y$) if they have the same sets of neighbours.
Note that twins are necessarily non-adjacent.
It is easy to see that $\sim$ is an equivalence relation.
Let $[x]_{\sim}$ denote the equivalence class of $x$.
 %, and recall that the equivalence classes of $\sim$ partition~$V$.

%We continue replacing vertices with twins as follows.
Let $G^1 := G$. Repeat the following for as long as possible.
Suppose that we have defined graphs $G^1, \ldots, G^i$ for some $i \geq 1$, which are all $\bm{k}$-extremal.
Suppose that $G^i$ contains a pair $u,v$ of non-adjacent vertices which are not twins.
Choose such a pair so that $|[u]_\sim|$ is maximal.
Let $G^{i+1}=(G^i)_u$ be the graph obtained from $G^i$ by deleting $v$ and adding a new vertex $u'$ which is a clone of $u$.
As was argued above, $G^{i+1}$ is necessarily $\bm{k}$-extremal.

For each $i \geq 1$, call an equivalence class $[x]_{\sim}$ in
the graph $G^i$ \emph{frozen} if $G^i$ is complete between $[x]_{\sim}$ and its complement, and \emph{unfrozen} otherwise.
Let $f(G^i)$ be the sum of sizes of all frozen classes plus the largest size of an unfrozen one.
It is easy to see that $f(G^i)$ is strictly increasing with $i$. Since
$f(G^i)$ is bounded above by $n$, the process terminates in at most $n-1$ steps with some $\bm{k}$-extremal graph $H$.
Since every pair of non-adjacent vertices in $H$ are twins, $H$ is complete multipartite, as desired.
\hfill$\square$

\medskip
Also, the symmetrisation can be applied to $Q_t$-optimal solutions. In particular, one can prove the following.

\begin{lemma}\label{lm:Qs} For every $\bm{k}$, we have $Q_0(\bm{k})=Q_1(\bm{k})=Q_2(\bm{k})$.\end{lemma}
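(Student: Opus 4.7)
Since $\feas_2(\bm{k}) \subseteq \feas_1(\bm{k}) \subseteq \feas_0(\bm{k})$, the inequalities $Q_2(\bm{k}) \leq Q_1(\bm{k}) \leq Q_0(\bm{k})$ are immediate. The plan is to establish the reverse inequality $Q_0(\bm{k}) \leq Q_2(\bm{k})$ by a reduction procedure that converts any $Q_0$-feasible triple into a $Q_2$-feasible one without decreasing the value of $q$.

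The key observation is that a pair $ij$ with $|\phi(ij)| \leq 1$ contributes $0$ to the sum~(\ref{q}), because either $\phi(ij)=\emptyset$ (so the pair is excluded from the sum) or $\log_2|\phi(ij)|=0$. Given a $Q_0$-feasible triple $(r,\phi,\bm{\alpha})$ containing some such pair $ij$, I would define
\[
a_i := 2\sum_{\substack{k \in [r]\setminus\{i,j\}\\ \phi(ik)\neq\emptyset}} \alpha_k \log_2|\phi(ik)|, \qquad a_j := 2\sum_{\substack{k \in [r]\setminus\{i,j\}\\ \phi(jk)\neq\emptyset}} \alpha_k \log_2|\phi(jk)|,
\]
so that the total contribution to $q$ from pairs incident to $\{i,j\}$ equals $\alpha_i a_i + \alpha_j a_j$ (using that the $ij$-term is $0$). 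This expression is linear in $(\alpha_i,\alpha_j)$ when their sum is held fixed, so assuming WLOG that $a_i \geq a_j$, I would replace $\alpha_i$ by $\alpha_i + \alpha_j$ and $\alpha_j$ by $0$; this weakly increases $q$ by $\alpha_j(a_i-a_j) \geq 0$, and the new vector still lies in $\Delta^r$.

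Having emptied vertex $j$, I would then delete it from the index set to obtain a triple $(r-1,\phi',\bm{\alpha}')$ in which $\phi'$ and $\bm{\alpha}'$ are the restrictions to $[r]\setminus\{j\}$. This triple remains in $\feas_0(\bm{k})$: we still have $r-1 < R(\bm{k})$, and $(\phi')^{-1}(c)$ is $K_{k_c}$-free as a subgraph of $\phi^{-1}(c)$; moreover, $q$ is unchanged by the deletion since every pair removed now has an endpoint of weight zero. Iterating strictly decreases $r$ at each step, so the process terminates after at most $r-1$ rounds with a triple in which every pair $ij$ satisfies $|\phi(ij)|\geq 2$, i.e.\ a $Q_2$-feasible triple whose $q$-value is at least that of the original. (In the degenerate subcase where the process reduces the index set to $r=1$, the resulting triple has $q=0$ and is vacuously $Q_2$-feasible; this only happens when $Q_0(\bm{k})=0$, in which case the conclusion is trivial anyway.)

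The argument is entirely elementary, with no genuinely hard step — the linearity of $q$ in each $\alpha_i$ does all the work. The only points that need care are the boundary case $r=1$ and a careful verification that the deletion step preserves every structural constraint defining $\feas_0(\bm{k})$; in particular, the Ramsey upper bound $r<R(\bm{k})$ is automatic since $r$ only decreases.
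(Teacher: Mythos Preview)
Your proof is correct and uses essentially the same idea as the paper: exploit the fact that a pair $ij$ with $|\phi(ij)|\le 1$ contributes nothing to $q$, so $q$ is linear under weight-shifting between $i$ and $j$, allowing one to empty a vertex and reduce $r$. The only cosmetic difference is that the paper phrases this as a minimality argument (choose a $Q_0$-optimal triple with smallest $r$ and derive a contradiction), whereas you give an explicit iterative reduction on an arbitrary feasible triple.
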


\begin{proof} Since trivially $\feas_0(\bm{k})\supseteq \feas_1(\bm{k})\supseteq \feas_2(\bm{k})$, we have $Q_0(\bm{k})\ge Q_1(\bm{k})\ge Q_2(\bm{k})$.

On the other hand, among all $Q_0$-optimal
solutions $(r,\phi,\bm{\alpha})$, fix one with $r$ as small as possible. 
Then, in particular, we have that each $\alpha_i$ is non-zero.
We claim that necessarily $(r,\phi,\bm{\alpha})\in \feas_2(\bm{k})$ (which will
give the required inequality $Q_2(\bm{k})\ge Q_0(\bm{k})$).
If this is not true, then $|\phi(ij)|\le 1$ for some $ij\in {[r]\choose 2}$, say
for $\{i,j\}=\{r-1,r\}$. For a real $c$, consider $\bm{\alpha'}$ defined
by $\alpha_{r-1}'=\alpha_{r-1}+c$, $\alpha_r'=\alpha_r-c$ and  $\alpha_h':=\alpha_h$ for 
all $h\in[r-2]$. In other words, we shift weight $c$ from $\alpha_r$ to $\alpha_{r-1}$. Since $q(r,\phi, \bm{\alpha'})$ is a linear function $f(c)$ of $c$
and $(r,\phi,\bm{\alpha'})\in \feas_0(\bm{k})$ when $|c|$ is at most
$\min\{\alpha_{r-1},\alpha_r\}>0$, it must be the case that $f(c)$ is
a constant function.
Thus $f(c)=f(0)=Q_0(\bm{k})$ regardless of $c$. In particular,
by taking $c=\alpha_r$, that is, by shifting all weight from $\alpha_r$ to
$\alpha_{r-1}$, we obtain a $Q_0$-optimal solution $(r,\phi,\bm{\alpha'})$ with $\alpha_r'=0$, whose restriction to $[r-1]$
gives another $Q_0$-optimal solution, contradicting the minimality
of $r$.\end{proof}

\section{A unifying lemma}\label{lemma}
%{The asymptotic value of $F(n;\emph{\textbf{k}})$}\label{asymp}\label{opt}
%\subsection{The optimisation problem}\label{opt}

The proofs of Theorems~\ref{optred} and~\ref{th:PartiteStructure} will both follow from the next lemma, which states that the number of $\bm{k}$-valid colourings of any complete $r$-partite graph $H$ can be bounded above by evaluating $q$ for a triple $(r,\phi,\bm{\beta}) \in \feas_1(\bm{k})$, where $\bm{\beta}$ is given by the ratios of the parts of $H$.

\begin{lemma}\label{mainlem}
For all $s \in \mathbb{N}$, $\bm{k} \in \mathbb{N}^s$ and $\eta > 0$, there exists $n_0 \in \mathbb{N}$ such that for every complete multipartite graph $H$ of order $N \geq n_0$ with (non-empty) parts $Y_1,\ldots,Y_r$ with at least one $\bm{k}$-valid colouring, there is some $\phi \in \Phi_1(r;\bm{k})$ such that
\[
\frac{\log_2 F(H;\bm{k})}{N^2/2} \leq q(r,\phi,\bm{\beta}) + \eta,
\]
where $\bm{\beta} := (|Y_1|/N,\ldots,|Y_r|/N)$.
\end{lemma}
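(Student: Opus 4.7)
The plan is to associate to each $\bm{k}$-valid colouring $\chi$ of $H$ a \emph{type} function $\phi_\chi$ via a density threshold, bound the number of colourings of each type, and sum. For a small constant $\epsilon=\epsilon(\eta,s,\bm{k})$ with $\epsilon\leq 1/s$, I would set
\[
\phi_\chi(ij) := \left\{c\in[s] : e_c(Y_i,Y_j)\geq \epsilon\,|Y_i|\,|Y_j|\right\},
\]
where $e_c(Y_i,Y_j)$ counts the colour-$c$ edges of $\chi$ between $Y_i$ and $Y_j$. Pigeonhole over the $s$ colours gives $|\phi_\chi(ij)|\geq 1$, and the key structural claim is that $\phi_\chi^{-1}(c)$ is $K_{k_c}$-free: indeed, if $\{i_1,\ldots,i_{k_c}\}$ spanned a clique in $\phi_\chi^{-1}(c)$, then colour~$c$ would have density at least $\epsilon$ between every pair of the parts $Y_{i_1},\ldots,Y_{i_{k_c}}$, and a K\H{o}v\'{a}ri--S\'{o}s--Tur\'{a}n-type supersaturation bound would produce a transversal monochromatic $K_{k_c}$ in $\chi$, contradicting $\bm{k}$-validity. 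Hence $\phi_\chi\in\Phi_1(r;\bm{k})$.

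Given this, I would bound the number $N(\phi)$ of colourings $\chi$ with $\phi_\chi=\phi$: between $Y_i$ and $Y_j$, at most $s\epsilon|Y_i||Y_j|$ edges can carry colours outside $\phi(ij)$, while the remainder use colours in $\phi(ij)$. Using the binomial estimate $\binom{n}{\alpha n}\leq 2^{n h(\alpha)}$ (with $h$ the binary entropy) to bound the choice of these exceptional positions, one obtains
\[
N(\phi)\leq \prod_{1\leq i<j\leq r}|\phi(ij)|^{|Y_i||Y_j|}\cdot 2^{\eta' N^2/2},
\]
where $\eta'=h(s\epsilon)+s\epsilon\log_2 s\to 0$ as $\epsilon\to 0$; choose $\epsilon$ so that $\eta'<\eta/2$. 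Since the number of possible types is at most $2^{s\binom{r}{2}}=O(1)$ and
\[
\sum_{1\leq i<j\leq r}|Y_i||Y_j|\log_2|\phi(ij)|=\frac{N^2}{2}\,q(r,\phi,\bm{\beta}),
\]
summing $N(\phi)$ over types and taking $\log_2$ yields the claimed inequality with $\phi$ taken to be a type maximising $q(r,\phi,\bm{\beta})$.

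The main obstacle will be the supersaturation step when some parts $Y_i$ are small. The K\H{o}v\'{a}ri--S\'{o}s--Tur\'{a}n bound yields at least $\bigl(\epsilon^{\binom{k_c}{2}}-o(1)\bigr)\prod_a|Y_{i_a}|$ transversal $K_{k_c}$'s in colour $c$, which furnishes at least one such clique once every $|Y_{i_a}|$ exceeds a constant $m_0=m_0(\epsilon,\bm{k})$. To handle parts of size below $m_0$, I would exploit that $r<R(\bm{k})$ is bounded, so the union $V_S$ of such parts has only $O(1)$ vertices; one first applies the above argument to the induced subgraph $H[V_L]$ on the large parts $L=[r]\setminus S$ to get some $\phi_L\in\Phi_1(|L|;\bm{k})$ bounding $F(H[V_L];\bm{k})$, and then uses the trivial estimate $F(H;\bm{k})\leq F(H[V_L];\bm{k})\cdot s^{|V_S|N}=F(H[V_L];\bm{k})\cdot 2^{o(N^2)}$ to transfer back to $H$. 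Finally, $\phi_L$ has to be converted to some $\phi\in\Phi_1(r;\bm{k})$ on $[r]$; since the total $\beta$-weight $|V_S|/N=o(1)$ of the small parts is negligible, any choice of safe single colours on pairs involving small parts (the existence of which ultimately rests on $r<R(\bm{k})$) contributes at most $O(|V_S|\log_2 s/N)=o(1)$ to $q(r,\phi,\bm{\beta})$ and is absorbed into $\eta$.
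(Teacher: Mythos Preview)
Your central step---that pairwise colour-$c$ density at least $\epsilon$ between $k_c$ parts forces a transversal monochromatic $K_{k_c}$---is false, and this is exactly why the paper invokes the Regularity Lemma rather than a bare density threshold. A concrete failure for $\bm{k}=(3,3)$: take $H$ complete tripartite on parts $A=A_1\cup A_2$, $B$, $C$ with $|A_1|=|A_2|$; colour $A_1{\times}B$, $A_2{\times}C$, $B{\times}C$ with colour~$1$ and $A_2{\times}B$, $A_1{\times}C$ with colour~$2$. This $\chi$ is $(3,3)$-valid (any transversal triangle would need its $A$-vertex in both $A_1$ and $A_2$), yet colour~$1$ has density at least $1/2$ between every pair of parts, so $\phi_\chi^{-1}(1)$ is a triangle and $\phi_\chi\notin\Phi_1(3;(3,3))$. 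The count $(\epsilon^{\binom{k_c}{2}}-o(1))\prod_a|Y_{i_a}|$ you quote is the Counting Lemma, which requires each pair to be $\epsilon$-\emph{regular} in colour~$c$, not merely to have density $\geq\epsilon$; no K\H{o}v\'ari--S\'os--Tur\'an statement supplies transversal cliques from density alone.

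The paper's remedy is to apply the multicolour Regularity Lemma to each colouring, define $\phi$ on pairs of \emph{clusters} via colour-$c$ $(\eps,\gamma)$-regularity, and invoke the Embedding Lemma for $K_{k_c}$-freeness; since clusters refine the $Y_i$, an averaging over transversal $r$-tuples of clusters is then used to recover a bound in terms of~$\bm{\beta}$. Your handling of small parts is also incomplete: extending an arbitrary $\phi_L\in\Phi_1(|L|;\bm{k})$ to some $\phi\in\Phi_1(r;\bm{k})$ by ``safe single colours'' need not be possible---the bound $r<R(\bm{k})$ only guarantees a valid colouring of $K_r$, not one agreeing with a prescribed $\phi_L$ on $L$ (already for $\bm{k}=(3,3)$, $|L|=4$, $r=5$ one can write down non-extendible $\phi_L$). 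The paper instead collapses each small $Y_i$ to a single token vertex \emph{before} regularising and refines clusters according to their $\chi$-attachment to these tokens, so that the resulting $\phi$ lies in $\Phi_1$ by construction, witnessed throughout by the same colouring~$\chi$.
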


In outline, the argument to prove Lemma~\ref{mainlem} is as follows.
The main idea of the proof is to use Szemer\'edi's Regularity Lemma to pass from a $\bm{k}$-valid colouring $\chi$ of $H$ to a set of feasible solutions  
that come from 
$r$-tuples of clusters which are transversal with respect to the
$r$-partition of~$H$. For each obtained solution $(r,\phi,\bm{\beta}) \in \feas_0(\bm{k})$, an upper bound on $q(r,\phi,\bm{\beta})$
can be translated via regularity into an upper bound on the number of restrictions
of possible colourings $\chi$ to the involved clusters (an idea already used in~\cite{abks}).
Then we estimate $F(H;\bm{k})$ by taking an appropriately weighted sum of logarithms of these bounds.
It turns out that the dominant contribution is from those triples $(r,\phi,\bm{\beta})$ that belong to $\feas_1(\bm{k})$, and so the bound obtained for $F(H;\bm{k})$ is in terms of the largest $q(r,\phi,\bm{\beta})$ among such triples.
%This is a generalisation of the idea used at the beginning of the proof of Theorem~\ref{alonetal} in~\cite{abks}.

\subsection{Regularity tools}

We will need the following definitions related to Szemer\'edi's Regularity Lemma.

\begin{definition}[Edge density, $\eps$-regular, $(\eps,\gamma)$-regular, equitable partition]
Given a graph $G$ and disjoint non-empty sets $A,B \subseteq V(G)$, we define the \emph{edge density} between $A$ and $B$ to be
\[
d(A,B) := \frac{|E(G[A,B])|}{|A|\,|B|}.
\]
Given $\eps,\gamma > 0$, the pair $(A,B)$ is called
\begin{itemize}
\item \emph{$\eps$-regular} if for every $X \subseteq A$ and $Y \subseteq B$ with $|X| \geq \eps|A|$ and $|Y| \geq \eps|B|$, we have that $|d(X,Y) - d(A,B)| \leq \eps$;
\item \emph{$(\eps,\gamma)$-regular} if it is $\eps$-regular and has edge density at least $\gamma$.
\end{itemize}
We call a partition $V(G)=V_1\cup\dots\cup V_m$
 \begin{itemize}
 \item \emph{equitable} if $\big|\,|V_i| - |V_j|\,\big| \leq 1$ for all $i,j \in [m]$;
 \item  \emph{$\eps$-regular} if it is equitable,
 $m\ge 1/\eps$, 
 %$|V_i| \leq \eps|V(G)|$ for every $i \in [m]$, 
 and all but at most $\eps \binom{m}{2}$ of the pairs $(V_i,V_j)$ with $1\le i < j\le m$ are $\eps$-regular.
 \end{itemize}
\end{definition}

Our first tool states that an induced subgraph of a regular pair is still regular, provided both parts are not too small.

\begin{proposition}\label{badrefine}
Let $\eps,\delta$ be such that $0 < 2\delta \leq \eps < 1$.
Suppose that $(X,Y)$ is a $\delta$-regular pair, and let $X' \subseteq X$ and $Y' \subseteq Y$.
If
\[
\min \left\lbrace\, \frac{|X'|}{|X|} ,\, \frac{|Y'|}{|Y|}\, \right\rbrace \geq \frac{\delta}{\eps},
\]
then the pair $(X',Y')$ is $\eps$-regular.
\end{proposition}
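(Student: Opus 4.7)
The plan is to verify the $\eps$-regularity condition directly from the hypothesis of $\delta$-regularity, using the slicing idea that any ``large enough'' subset of a regular pair inherits the density up to $\delta$.

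First I would unpack what needs to be shown: given $A \subseteq X'$ and $B \subseteq Y'$ with $|A| \geq \eps|X'|$ and $|B| \geq \eps|Y'|$, I must bound $|d(A,B) - d(X',Y')|$ by $\eps$. The natural route is the triangle inequality
\[
|d(A,B) - d(X',Y')| \leq |d(A,B) - d(X,Y)| + |d(X,Y) - d(X',Y')|,
\]
and try to bound each of the two terms on the right by $\delta$ using the $\delta$-regularity of the original pair $(X,Y)$.

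For the first term, note that $A \subseteq X$ and $B \subseteq Y$, and the size thresholds chain nicely:
\[
|A| \geq \eps|X'| \geq \eps \cdot \frac{\delta}{\eps}|X| = \delta|X|,
\]
and analogously $|B| \geq \delta|Y|$. Hence $\delta$-regularity of $(X,Y)$ gives $|d(A,B) - d(X,Y)| \leq \delta$. For the second term, I apply the exact same reasoning with $(A,B)$ replaced by $(X',Y')$: by hypothesis $|X'| \geq (\delta/\eps)|X| \geq \delta|X|$ (since $\eps < 1$) and similarly for $Y'$, so again $|d(X',Y') - d(X,Y)| \leq \delta$.

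Putting the two bounds together yields $|d(A,B) - d(X',Y')| \leq 2\delta \leq \eps$, which is exactly the regularity condition for $(X',Y')$. There is no real obstacle here — the entire argument is just a two-step triangle inequality, and the only thing to be careful about is confirming that the size thresholds $\eps|X'|$, $\eps|Y'|$, $|X'|$, $|Y'|$ all exceed the $\delta$-proportion of $|X|$, $|Y|$ required to invoke regularity of the original pair; this is immediate from $|X'|/|X|, |Y'|/|Y| \geq \delta/\eps$ and $\eps < 1$.
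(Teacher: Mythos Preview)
Your proof is correct and essentially identical to the paper's own argument: both pass through $d(X,Y)$ via the triangle inequality, bounding each of the two resulting terms by $\delta$ using the $\delta$-regularity of $(X,Y)$ together with the size hypotheses, and then conclude using $2\delta\le\eps$. The only difference is notational (you write $A,B$ where the paper writes $X'',Y''$).
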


\begin{proof}
Let $X'' \subseteq X'$ and $Y'' \subseteq Y'$ be such that $|X''| \geq \eps|X'|$ and $|Y''| \geq \eps|Y'|$.
Then $|X''|/|X|, |Y''|/|Y| \geq \delta$.
Since $(X,Y)$ is $\delta$-regular, we have that
$
|d(X'',Y'') - d(X,Y)| \leq \delta.
$
Note further that $|X'|/|X|,|Y'|/|Y| \geq \delta/\eps > \delta$, so
$
|d(X',Y') - d(X,Y)| \leq \delta.
$
By the Triangle Inequality,
$
|d(X'',Y'') - d(X',Y')| \leq 2\delta\le \eps.
$
 This implies that $(X',Y')$ is $\eps$-regular.
\hfill\mbox{ }\end{proof}

We use the following multicolour version of Szemer\'edi's Regularity Lemma~\cite{reg} (see~e.g Theorem~1.18 in Koml\'os and Simonovits~\cite{komsim}).

\begin{lemma}[Multicolour Regularity Lemma]\label{multicol}
For every $\eps > 0$ and $s \in \mathbb{N}$, there exists $M \in \mathbb{N}$ such that for any graph $G$ on $n \geq M$ vertices and any $s$-edge-colouring $\chi : E(G) \rightarrow [s]$, there is an (equitable) partition $V(G) = V_1 \cup \ldots \cup V_m$ with $m \leq M$, which is $\eps$-regular simultaneously with respect to all graphs $(V(G),\chi^{-1}(c))$, with $c \in [s]$.\hfill$\square$
\end{lemma}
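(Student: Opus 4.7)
The plan is to adapt the classical index-increment proof of Szemerédi's Regularity Lemma to the multicolour setting, using a sum of mean-square densities across the $s$ colour classes as the energy function. Writing $G_c := (V(G), \chi^{-1}(c))$ for the colour-$c$ subgraph and, for any partition $\mathcal{P} = \{V_1,\ldots,V_m\}$ of $V(G)$, setting
\[
\mathrm{ind}_{G_c}(\mathcal{P}) := \sum_{1 \le i<j \le m} \frac{|V_i|\,|V_j|}{|V(G)|^2}\, d_{G_c}(V_i,V_j)^2,
\]
define the \emph{multicolour index} $\mathrm{ind}_\chi(\mathcal{P}) := \sum_{c=1}^s \mathrm{ind}_{G_c}(\mathcal{P})$. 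Since each $d_{G_c}\in[0,1]$, each $\mathrm{ind}_{G_c}\in[0,1/2]$, and so $\mathrm{ind}_\chi \in [0,s/2]$; this upper bound is independent of $n$ and is the mechanism that forces termination.

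Second, I would establish the two standard auxiliary facts in their multicolour form. \emph{Refinement monotonicity:} if $\mathcal{P}'$ refines $\mathcal{P}$, then Cauchy--Schwarz applied inside each $G_c$ gives $\mathrm{ind}_{G_c}(\mathcal{P}') \ge \mathrm{ind}_{G_c}(\mathcal{P})$, and summing over $c$ yields $\mathrm{ind}_\chi(\mathcal{P}') \ge \mathrm{ind}_\chi(\mathcal{P})$. \emph{Defect gain:} if some pair $(V_i,V_j)$ fails $\eps$-regularity in some colour $c$, witnessed by $X \subseteq V_i$, $Y \subseteq V_j$ with $|X|\ge\eps|V_i|$, $|Y|\ge\eps|V_j|$ and $|d_{G_c}(X,Y) - d_{G_c}(V_i,V_j)| > \eps$, then refining $V_i \to \{X,\,V_i\setminus X\}$ and $V_j \to \{Y,\,V_j\setminus Y\}$ increases $\mathrm{ind}_{G_c}$, and hence $\mathrm{ind}_\chi$, by at least $\eps^4 |V_i|\,|V_j|/|V(G)|^2$ via the usual defect-form Cauchy--Schwarz calculation.

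Third, I would run the standard iteration. Start with any equitable partition into $\lceil 1/\eps\rceil$ parts. At stage $t$, let $\mathcal{P}^{(t)}$ be the current equitable partition, of size $m_t$. Either $\mathcal{P}^{(t)}$ is simultaneously $\eps$-regular with respect to every $c\in[s]$ (and we stop), or at least $\eps\binom{m_t}{2}$ pairs are $\eps$-irregular in at least one colour. For each such bad pair pick one offending colour and apply the defect-gain refinement in parallel; then equitise by the Komlós--Simonovits procedure to obtain an equitable $\mathcal{P}^{(t+1)}$ whose size $m_{t+1}$ is bounded in terms of $m_t$, $\eps$ and $s$ only. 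Combining the defect gains with refinement monotonicity across the equitisation step gives $\mathrm{ind}_\chi(\mathcal{P}^{(t+1)}) - \mathrm{ind}_\chi(\mathcal{P}^{(t)}) \ge \eps^5$. Because $\mathrm{ind}_\chi \le s/2$, at most $\lceil s/(2\eps^5)\rceil$ stages suffice, giving the required bound $m\le M = M(\eps,s)$.

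The main obstacle is the equitisation step when several colours simultaneously witness irregularity on different pairs: one must refine enough to restore equitability while maintaining the single-colour gains for every bad pair, without letting $m_t$ grow uncontrollably. This is handled exactly as in the single-colour proof, since a single refinement per bad pair (destroying irregularity in just one chosen colour) already produces the $\eps^5$ gain in $\mathrm{ind}_\chi$, and the number of new parts created per stage depends only on $\eps$ and $s$. Consequently $M(\eps,s)$ is a tower-type function in $1/\eps$ and $s$, mirroring the bound in the classical single-colour Regularity Lemma.
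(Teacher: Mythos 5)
The paper does not prove Lemma~\ref{multicol}; it cites it as Theorem~1.18 of Koml\'os and Simonovits~\cite{komsim}, a standard multicolour form of Szemer\'edi's Regularity Lemma. Your sketch correctly reproduces the index-increment argument underlying that cited result: sum the mean-square densities over the $s$ colour graphs, use Cauchy--Schwarz for monotonicity under refinement and the defect form for the $\Omega(\eps^4)$ gain per irregular pair, then equitise and iterate. One small remark: since the colour classes partition $E(G)$, in fact $\sum_{c}d_{G_c}(A,B)\le 1$, so $\mathrm{ind}_\chi(\mathcal{P})\le 1/2$ rather than $s/2$; your weaker bound is of course still valid and changes only the constant in the stage count. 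The proof is correct in approach and the obstacle you flag (controlling part growth during simultaneous multi-colour equitisation) is indeed the one the standard argument addresses, exactly as in the single-colour case.
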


Finally, we need the following bound.

\begin{proposition}\label{continuity}
Let $s,r \in \mathbb{N}$ and $\bm{k} \in \mathbb{N}^s$.
Let $\phi \in \Phi_0(r;\bm{k})$ and $\bm{\alpha},\bm{\beta} \in \Delta^r$.
Then
\[
|q(r,\phi,\bm{\alpha})-q(r,\phi,\bm{\beta})| \leq 2\|\bm{\alpha}-\bm{\beta}\|_1 \log_2 s.
\]
\end{proposition}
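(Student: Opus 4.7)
The plan is to bound the difference $|q(r,\phi,\bm{\alpha}) - q(r,\phi,\bm{\beta})|$ term by term. Since only pairs with $\phi(ij) \neq \emptyset$ contribute to $q$, and for those $1 \leq |\phi(ij)| \leq s$, the coefficient $\log_2|\phi(ij)|$ lies in $[0,\log_2 s]$. Pulling this uniform upper bound outside the sum reduces the task to estimating $\sum_{i<j}|\alpha_i\alpha_j - \beta_i\beta_j|$:
\[
|q(r,\phi,\bm{\alpha}) - q(r,\phi,\bm{\beta})| \leq 2\log_2 s \sum_{i<j}|\alpha_i\alpha_j - \beta_i\beta_j|.
\]

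Next, I would use the standard telescoping identity $\alpha_i\alpha_j - \beta_i\beta_j = \alpha_i(\alpha_j-\beta_j) + \beta_j(\alpha_i - \beta_i)$, which together with the triangle inequality gives $|\alpha_i\alpha_j - \beta_i\beta_j| \leq \alpha_i|\alpha_j - \beta_j| + \beta_j|\alpha_i - \beta_i|$. Passing from unordered to ordered pairs (at the cost of a factor of $2$) and summing, I would exchange the order of summation and use that $\sum_i \alpha_i = \sum_j \beta_j = 1$ to obtain
\[
2\sum_{i<j}|\alpha_i\alpha_j - \beta_i\beta_j| \leq \sum_j |\alpha_j - \beta_j|\sum_{i \neq j}\alpha_i + \sum_i |\alpha_i - \beta_i|\sum_{j \neq i}\beta_j \leq 2\|\bm{\alpha}-\bm{\beta}\|_1,
\]
so $\sum_{i<j}|\alpha_i\alpha_j - \beta_i\beta_j| \leq \|\bm{\alpha}-\bm{\beta}\|_1$. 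Combining with the display above yields the claimed bound.

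There is no real obstacle here; this is a routine Lipschitz estimate for the bilinear form $q(r,\phi,\cdot)$ on the simplex $\Delta^r$. The only minor care needed is to respect the convention that pairs with $\phi(ij) = \emptyset$ are excluded from the sum defining $q$, so they contribute $0$ to both sides and can safely be ignored.
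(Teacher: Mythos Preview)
Your proof is correct and follows essentially the same approach as the paper: both use the telescoping identity $\alpha_i\alpha_j-\beta_i\beta_j=\alpha_i(\alpha_j-\beta_j)+\beta_j(\alpha_i-\beta_i)$ together with $0\le\log_2|\phi(ij)|\le\log_2 s$ and $\sum_i\alpha_i=\sum_j\beta_j=1$. The only cosmetic difference is that the paper keeps $\log_2|\phi(ij)|$ inside the sum while telescoping and bounds it at the end, whereas you pull the uniform bound $\log_2 s$ out first; the arithmetic is identical.
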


\begin{proof}
We have that
\begin{align*}
&\quad~ |q(r,\phi,\bm{\alpha})-q(r,\phi,\bm{\beta})|\\
&= \biggl| \sum_{i \in [r]}\alpha_i\sum_{j \in [r]\setminus \lbrace i \rbrace}\alpha_j\log|\phi(ij)| - \sum_{i \in [r]}\beta_i\sum_{j \in [r]\setminus \lbrace i \rbrace}\beta_j\log|\phi(ij)| \biggr|\\
&\leq \biggl| \sum_{i \in [r]}(\alpha_i-\beta_i) \sum_{j \in [r]\setminus \lbrace i \rbrace}\alpha_j\log|\phi(ij)| \biggr| + \biggl| \sum_{j \in [r]}(\alpha_j-\beta_j)\sum_{i \in [r]\setminus \lbrace j \rbrace}\beta_i\log|\phi(ij)| \biggr|\\
&\leq 2\log_2(s) \cdot \|\bm{\alpha}-\bm{\beta}\|_1.
\end{align*}
\hfill\mbox{ }\end{proof}

\subsection{Proof of Lemma~\ref{mainlem}}
Let $\eta > 0$ (assumed without loss of generality to be sufficiently small) and choose an additional constant $\gamma$ so that $0 < \gamma \ll \eta \ll 1/R(\bm{k})$.
By the (standard) Embedding Lemma (see, for example,~\cite[Theorem~2.1]{komsim}), there exist $\eps > 0$ and $m_0 \in \mathbb{N}$ such that the following holds for all $c \in [s]$:
\emph{if $G$ is a graph with a partition $V(G) = W_1 \cup \ldots \cup W_{k_c}$ such that $|W_i| \geq m_0$ for all $i \in [k_c]$ and every pair $(W_i,W_j)$ for $1 \leq i < j \leq k_c$ is $(\eps,\gamma)$-regular, then $K_{k_c} \subseteq G$}.

We may assume that $0 < 1/m_0 \ll \eps \ll \gamma$ since whenever $\eps' \leq \eps$, we have that an $\eps'$-regular pair is also an $\eps$-regular pair.
Let $M$ be the integer returned by Lemma~\ref{multicol} when applied with parameters $\eps^2$ and $s$.
Choose $n_0 \in \mathbb{N}$ and assume, without loss of generality, that $1/n_0 \ll 1/M \ll 1/m_0$.
We have the hierarchy
\begin{equation}\label{eq:constants}
0 < 1/n_0 \ll 1/M \ll 1/m_0 \ll \eps \ll \gamma \ll \eta \ll 1/R(\bm{k}).
\end{equation}
Let $N \geq n_0$ be arbitrary.
Let $H$ be a complete multipartite graph on $N$ vertices with parts $Y_1,\ldots,Y_r$.
We may assume that $r < R(\bm{k})$ otherwise $F(H;\bm{k}) = 0$.
Let $G = (V,E)$ be a graph obtained from $H$ by removing all but one vertex from every part $Y_i$ of size at most $\eta^2 N$ (and all edges incident with the removed vertices).
Write $n := |V|$ and $X_i := Y_i \cap V$ for all $i \in [r]$.
Then $N-n \leq R(\bm{k}) \cdot \eta^2 N$.
So
\[
F(G,\bm{k}) \geq F(H,\bm{k}) \cdot s^{-R(\bm{k}) \eta^2 N^2}
\]
and so
\begin{align}\label{Fchange}
\nonumber \frac{\log_2 F(G;\bm{k})}{n^2/2} &\geq \frac{\log_2 F(G;\bm{k})}{N^2/2} \geq \frac{\log_2 F(H;\bm{k})}{N^2/2} - 3R(\bm{k})\eta^2\log_2 s\\
&\geq \frac{\log_2 F(H;\bm{k})}{N^2/2} - \frac{\eta}{3}.
\end{align}
Define $\bm{\alpha} := (|X_1|/n,\ldots, |X_r|/n)$ and $\bm{\beta} := (|Y_1|/N,\ldots, |Y_r|/N)$.
Then 
\begin{equation}\label{L1diff}
\|\bm{\alpha}-\bm{\beta}\|_1 \leq \frac{R(\bm{k})\eta^2 N}{n} \leq 2R(\bm{k})\eta^2.
\end{equation}
Without loss of generality, there is some $w \in [r]$ such that $X_i = \lbrace x_i \rbrace$ is a singleton for all $i \in [w]$, and $|X_j| > \eta^2 n$ for all $w < j \leq r$.

For the rest of the proof, we will work with $G$ rather than $H$.
Informally, the reason for passing to $G$ is the following.
After applying the Regularity Lemma to $H$ with a valid colouring $\chi$, we do not \emph{a priori} have control on the distribution of coloured edges incident to small parts of $H$.
If the statement of Lemma~\ref{mainlem} asked for a $\phi \in \Phi_0(r;\bm{k})$, we could simply neglect these parts; but since we require $\phi \in \Phi_1(r;\bm{k})$ we cannot do this.
Therefore we introduce $G$ in which each small part $X_i$ is replaced by a token vertex $x_i$, which merely asserts the existence of its part.
But for each $x \in V(G)$, there are only constantly many possible values for $\lbrace \chi(xx_i) : i \in [w] \rbrace$ for all $s$-edge-colourings $\chi$.
Thus we can refine our regularity partition into parts according to these values.
Now we have good control between \emph{all} pairs of parts: if both are large then regularity provides good control; and if one of them is small it is necessarily a single vertex and $\chi$ is constant on all edges between the parts.

Let $\chi : E \rightarrow [s]$ be a $\bm{k}$-valid colouring of $G$.
By the choice of $M$ (that is, by Lemma~\ref{multicol} applied to $G$ and $\chi$ with parameters $\eps^2$ and $s$), there is an (equitable) partition $V = V_1 \cup \ldots \cup V_m$,  with $m \leq M$, which is $\eps^2$-regular simultaneously with respect to all graphs $(V,\chi^{-1}(c))$, $c \in [s]$.

We will now take a common refinement of $X_1,\ldots,X_r$ and $V_1,\ldots,V_m$ which also takes into account attachments to $W := \lbrace x_1,\ldots,x_w \rbrace$.
Namely, for all $j \in [m]$, subdivide $V_j$ into at most $r(s^w+w)$ parts as follows.
Put each vertex in $W \cap V_j$ into a separate part. Now, for any vertices $y,y'$ remaining in $V_j$, put $y$ and $y'$ in the same part if and only if there is some $\ell \in [r]$ such that $\lbrace y,y' \rbrace \subseteq X_\ell$, and $\chi(x_hy) = \chi(x_hy')$ for all $h \in [w]$. 
Thus we obtain a (not necessarily equitable) partition $U_{i,1} \cup \ldots \cup U_{i,m_i}$ of $X_i$ for each $i \in [r]$, where $m_i \leq M(s^w+w)$.
Let $\mathcal{U}$ be the collection of sets~$U_{i,j}$.
It is indexed by 
 \[
 I:=\{\,ij: i \in [r]\mbox{ and } j \in [m_i]\,\}.
 \]

For a colour $c\in [s]$, let $P^c$ consist of all pairs of indices
$\{ig,jh\}\in{I\choose 2}$ such that $\chi^{-1}(c)[U_{i,g},U_{j,h}]$  is $(\eps,\gamma)$-regular, and at least one of the following holds: 
$U_{i,g}$ is a vertex of $W$; $U_{j,h}$ is a vertex of $W$; or $\min \lbrace |U_{i,g}|,|U_{j,h}| \rbrace \geq m_0$.
(So if, say, $U_{i,g}$ is a vertex of $W$, then $\lbrace ig,jh \rbrace \in P^c$ for some $c \in [s]$ since $G[U_{i,g},U_{j,h}]$ is a monochromatic star under~$\chi$.)
We define $E^c\subseteq E$ to be the union of $\chi^{-1}(c)[U_{i,g},U_{j,h}]$ over all pairs $\{ig,jh\}\in P^c$.
Let $
 E_0:=E\setminus (E^1\cup\dots\cup E^s)$. 
 %and call the edges in $E_0$ \emph{inessential}. 
 Thus $E_0$ consists of edges without endpoints in $W$ which are incident with a part of size less than $m_0$; and edges which come from coloured pairs 
that are not $\eps$-regular or have edge density less than~$\gamma$. 
 The following claim, whose proof is fairly standard, shows that $E_0$ cannot
contain many edges.

\begin{claim}\label{badpairs} $|E_0| \leq s\gamma n^2$.
\end{claim}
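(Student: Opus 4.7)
The plan is to split $E_0$ into three families according to the reason each edge avoids every $E^c$, and to bound each using the hierarchy~(\ref{eq:constants}).

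I would first observe that no edge of $E_0$ has an endpoint in $W$. Indeed, for any $x_h\in W$ and any other refined part $U_{i,g}$ with $i\ne h$, the bipartite subgraph $G[\{x_h\},U_{i,g}]$ is complete (since $G$ is complete multipartite with $U_{i,g}\subseteq X_i$) and, by the way the refinement is built, monochromatic under $\chi$; this makes the pair trivially $(\eps,\gamma)$-regular in the relevant colour class and hence a member of $P^c$ for some $c$.

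For the remaining edges I would group them by the Regularity Lemma pair $(V_a,V_b)$ that contains their endpoints, and bound three separate contributions. \textbf{(i)~Irregular Regularity Lemma pairs:} for each colour $c$, at most $\eps^2\binom{m}{2}$ pairs $(V_a,V_b)$ fail to be $\eps^2$-regular in $\chi^{-1}(c)$, so summing over the $s$ colours and multiplying by the maximum pair size $(n/m+1)^2$ bounds this contribution by roughly $2s\eps^2 n^2$. \textbf{(ii)~Small refinement parts:} each $V_a$ is cut into at most $r(s^w+w)$ refined pieces $U_{i,g}\subseteq V_a$, so the total size of pieces with $|U_{i,g}|<\max\{\eps|V_a|,m_0\}$ is at most $r(s^w+w)\eps|V_a|$ (using $m_0\le \eps|V_a|$, which follows from $1/M\ll 1/m_0\ll \eps$ together with the lower bound $|V_a|\ge n/(2M)$); summing over $a$ and counting every edge touching such a piece contributes at most $r(s^w+w)\eps n^2$. \textbf{(iii)~Sparse coloured pairs:} for every remaining refined pair $(U_{i,g},U_{j,h})$, both parts cover at least an $\eps$-fraction of their enclosing Regularity Lemma cells, and the enclosing pair is $\eps^2$-regular in every colour, so Proposition~\ref{badrefine} (applied with $\delta=\eps^2$) shows that the refined pair is $\eps$-regular in every $\chi^{-1}(c)$; such a pair can still produce edges of $E_0$ only when its colour-$c$ density is below $\gamma$, giving at most $\gamma|U_{i,g}||U_{j,h}|$ edges of colour $c$ per pair, and after summing over colours and refined pairs, at most $\frac{1}{2}s\gamma n^2$ in total.

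Adding the three bounds gives $|E_0|\le 2s\eps^2 n^2 + r(s^w+w)\eps n^2 + \frac{1}{2}s\gamma n^2$, and invoking $\eps\ll\gamma$ together with $r<R(\bm{k})$ (so that the prefactor $r(s^w+w)$ is a constant) absorbs the first two terms into $\frac{1}{2}s\gamma n^2$, yielding $|E_0|\le s\gamma n^2$. The only slightly delicate point is step~\textbf{(ii)}, where one must verify that the absolute threshold $m_0$ is dominated by the relative threshold $\eps|V_a|$ under the hierarchy~(\ref{eq:constants}), and that the number of refinement classes per $V_a$ is genuinely bounded by the constant $r(s^w+w)$ independent of $n$. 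Everything else is a routine application of the multicolour Regularity Lemma and Proposition~\ref{badrefine}.
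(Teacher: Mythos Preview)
Your argument is essentially correct and mirrors the paper's decomposition into irregular pairs, small refined parts, and sparse coloured pairs. The only omission is that your phrase ``group them by the Regularity Lemma pair $(V_a,V_b)$'' tacitly assumes $a\ne b$, so you have not accounted for edges with both endpoints inside a single class $V_a$; such edges do exist, since $G$ is complete multipartite and a single $V_a$ may meet several parts $X_i$. The paper folds these into its set $E_{\mathrm{irr}}$; there are at most $m\binom{\lceil n/m\rceil}{2}\le \eps^2 n^2$ of them (using $m\ge 1/\eps^2$), so the fix is trivial and your final bound stands.
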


\begin{claimproof} Call a part $U_{i,g}\subseteq V_\ell$ \emph{small} if
$|U_{i,g}|< \eps|V_\ell|$. Let $E_{\mathrm{small}}\subseteq E$ be the
set of edges that
have at least one vertex in a small part.
Since each $V_\ell$ is subdivided into at most $r(s^w+w)<2R(\bm{k})s^{R(\bm{k})}$ new parts, the number of vertices in small parts is at most $2\eps R(\bm{k})s^{R(\bm{k})}n$
and, trivially, 
 \[
 |E_{\mathrm{small}}|\le 2\eps R(\bm{k})s^{R(\bm{k})}n^2.
 \]

Let $E_{\mathrm{irr}}\subseteq E$ consist of those edges of $G$ that
lie inside some $V_\ell$ or belong to some
colour-$c$ bipartite subgraph $\chi^{-1}(c)[V_\ell,V_{\ell'}]$ which is
not $\eps^2$-regular.
Since $V_1 \cup \ldots \cup V_m$ is an $\eps^2$-regular (equitable) partition, 
we have
 \[
 %\begin{equation}\label{irr}
\left| E_{\mathrm{irr}} \right| \leq  m \binom{\lceil n/m \rceil}2 + s\eps^2  \binom{m}{2} \left\lceil \frac{n}{m} \right\rceil^2% < \eps n^2.
 %\end{equation}
 \]
 which is by $m\ge 1/\eps^2$ at most, say, $\eps n^2$.

Next, we bound the size of $E_0 \setminus (E_{\mathrm{small}}\cup E_{\mathrm{irr}})$. Let $e$ be any edge from this set. 
Since each $U_{i,g}$ is an independent set in $G$, we have
$e \in E(G[U_{i,g},U_{j,h}])$ for some  distinct $ig,jh\in I$. Let $\ell,\ell'\in [m]$ satisfy
$V_\ell\supseteq U_{i,g}$ and $V_{\ell'}\supseteq U_{j,h}$.
Since
$e\not \in E_{\mathrm{small}}$, we have 
 \[
 \min\{\,|U_{i,g}|,\,|U_{j,h}|\,\}\ge \min\{\,\eps|V_\ell|,\,\eps|V_{\ell'}|\,\}\ge \eps\lfloor n/m\rfloor,
 \] 
 which is at least $m_0$ by our choice of constants.
Let $c=\chi(e)$ be the colour of $e$. Since $e\not\in E_{\mathrm{irr}}$, we have that $\ell\not=\ell'$ and $\chi^{-1}(c)[V_\ell,V_{\ell'}]$ is $\eps^2$-regular. Thus Proposition~\ref{badrefine} implies that $\chi^{-1}(c)[U_{i,g},U_{j,h}]$ is $\eps$-regular. Since $e\not \in E^c$,
it must be the case that $\chi^{-1}(c)[U_{i,g},U_{j,h}]\ni e$ has edge density less than $\gamma$. 
We conclude that $E_0 \setminus (E_{\mathrm{small}}\cup E_{\mathrm{irr}})$ has edge density at most $s\gamma$
between any pair $(U_{i,g},U_{j,h})$. Thus
 \[
 |E_0| \leq |E_{\mathrm{small}}| + |E_{\mathrm{irr}}| +  \sum_{\{ig,ih\}\in {I\choose 2}} s\gamma\,|U_{i,g}|\,|U_{j,h}|\leq  2\eps R(\bm{k})s^{R(\bm{k})}n^2+\eps n^2 + s\gamma{n\choose 2} < s\gamma n^2,
\]
proving the claim.
\end{claimproof}

\medskip
\noindent
Define $\phi : \binom{I}{2} \rightarrow 2^{[s]}$ by setting, for all $\lbrace ig,jh\rbrace \in \binom{I}{2}$,
\[
 \phi(ig,jh) := \{c\in [s]: \{ig,jh\}\in P^c\}.
%\{c\in [s] : |U_{i,g}|,|U_{j,h}| \geq m_0 \text{ and } \{ig,jh\}\not\in \Ibad{c}\},
%\left\{ c \in [s]: |U_{i,g}|,|U_{j,h}| \geq m_0 \text{ and } \chi^{-1}(c)[U_{i,g},U_{j,h}] \text{ is } (\eps,\gamma)\text{-regular} \right\}
\]

%Observe that, if $U_{i,g}$ is a set consisting of a vertex of $W$, then $\phi(ig,jh) \neq \emptyset$. 
If neither $U_{i,g}$ nor $U_{j,h}$ is a vertex of $W$ but $\min\{\,|U_{i,g}|,|U_{j,h}|\,\}< m_0$, then $\phi(ig,jh)$ is empty.
Otherwise, $\phi(ig,jh)$ consists of those $c$ for which $\chi^{-1}(c)[U_{i,g},U_{j,h}]$ is $(\eps,\gamma)$-regular. Also, let  $\chi_0 = \chi|_{E_0}$ be the restriction of $\chi$ to $E_0$.

For each $\bm{k}$-valid colouring $\chi$ of $G$, fix one
partition $V=V_1\cup\dots\cup V_{m}$ as above and then define the tuple $(\mathcal{U},I,\phi,E_0,\chi_0)$ accordingly.

\begin{claim}\label{S5}
The number of possible tuples $(\mathcal{U},I,\phi,E_0,\chi_0)$ is at most $2^{\eta n^2/4}$.
\end{claim}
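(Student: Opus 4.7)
The plan is to bound the number of possibilities for each of the five components of the tuple separately, exploiting the fact that only $E_0$ and $\chi_0$ contribute at the $2^{\Theta(n^2)}$ scale while $\mathcal{U}$, $I$ and $\phi$ together contribute at most $2^{O(n)}$. The hierarchy~\eqref{eq:constants}, and in particular $\gamma\ll\eta$, is exactly what will make the $E_0$- and $\chi_0$-counts fit below $2^{\eta n^2/4}$.

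First I would count the partitions $\mathcal{U}$. Every block of $\mathcal{U}$ is contained in some $X_i$ (with $i\le r<R(\bm{k})$) and in some $V_\ell$ (with $\ell\le m\le M$); within each such $V_\ell$ we further split into at most $s^w+w$ classes according to the $\chi$-colour pattern towards~$W$. Hence $|I|\le K$ where $K:=R(\bm{k})\,M\,(s^{R(\bm{k})}+R(\bm{k}))$ depends only on $\bm{k}$ and $s$, so the number of partitions of $V$ into at most $K$ labelled blocks is at most $K^n=2^{O(n)}$. Given $\mathcal{U}$, the index set $I$ is determined, and $\phi:\binom{I}{2}\to 2^{[s]}$ has at most $2^{s\binom{K}{2}}$ possibilities, which is a constant.

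Next, Claim~\ref{badpairs} gives $|E_0|\le s\gamma n^2$, so the standard tail estimate $\sum_{i\le k}\binom{N}{i}\le(eN/k)^k$ (applied with $N=\binom{n}{2}$ and $k=s\gamma n^2$) yields at most $(e/(2s\gamma))^{s\gamma n^2}$ choices for $E_0$, whose base-$2$ logarithm is $s\gamma n^2\log_2(e/(2s\gamma))$. Given $E_0$, there are at most $s^{|E_0|}\le s^{s\gamma n^2}$ colourings $\chi_0:E_0\to[s]$, contributing $s\gamma n^2\log_2 s$ to the logarithm. Both contributions are at most $\eta n^2/16$ by the hierarchy $\gamma\ll\eta$ in~\eqref{eq:constants} (which in particular absorbs the $\log_2(1/\gamma)$ factor). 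Multiplying the four bounds yields a total of $2^{O(n)+\eta n^2/8}\le 2^{\eta n^2/4}$ for $n\ge n_0$, as required.

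The only genuine obstacle is ensuring that the $E_0$-count is negligible: the $\log_2(1/\gamma)$ factor inherent in counting small subsets of a set of size $\Theta(n^2)$ must be absorbed into $\eta$, which forces $\gamma$ to be quantitatively much smaller than $\eta$. This is precisely what the chain $\gamma\ll\eta\ll1/R(\bm{k})$ in~\eqref{eq:constants} delivers, so the argument is conceptually routine once the hierarchy of constants is chosen correctly.
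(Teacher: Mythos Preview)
Your proof is correct and follows essentially the same approach as the paper: bound the partitions $\mathcal{U}$ by $K^n$ for a constant $K$, bound $\phi$ by a constant, and use Claim~\ref{badpairs} together with the hierarchy $\gamma\ll\eta$ to bound the choices for $E_0$ and $\chi_0$. The only cosmetic differences are that the paper combines the $E_0$- and $\chi_0$-counts into a single bound $\binom{\binom{n}{2}}{s\gamma n^2}(s+1)^{s\gamma n^2}$ and crudely bounds each of the three factors by $2^{\eta n^2/12}$ rather than keeping track of the $2^{O(n)}$ contribution from $(\mathcal{U},I,\phi)$.
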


\begin{claimproof}
 Clearly, there are at most $(M(s^w+w))^n \leq (M(s^{R(\bm{k})}+R(\bm{k}))^n < 2^{\eta n^2/12}$ possible partitions of $V$ in which, for all $i \in [r]$, every $x \in X_i$ lies in one of at most $M(s^w+w)$ parts.
Each such partition determines $\mathcal{U}$ and $I$ uniquely (since
the partition $V=X_1\cup\dots\cup X_r$ is fixed throughout the whole proof).

Given $\mathcal{U}$ and $I$, the number of possible $\phi$ is at most
$(2^s)^{\binom{r(Ms^w+w)}{2}} < 2^{\eta n^2/12}$.
By Claim~\ref{badpairs}, the number of ways to choose $E_0$ and colour these edges (i.e. choose $\chi_0$) is, very roughly, at most
\[
\binom{\binom{n}{2}}{s\gamma n^2} (s+1)^{s\gamma n^2} < 2^{\eta n^2/12}.
\]
The claim is proved by multiplying these three bounds.
\end{claimproof}

\medskip
\noindent
Fix a tuple $(\mathcal{U},I,\phi,E_0,\chi_0)$ such that $\mathcal{C} \neq \emptyset$, where $\mathcal{C}$ is the set of colourings $\chi$ which generate it.
Our next step is to provide an upper bound for $|\mathcal{C}|$.
For every $\chi \in \mathcal{C}$, we have $\chi|_{E_0} = \chi_0$. Also, by the definition of $E_0$, every $e \in E \setminus E_0$ lies in some $(\eps,\gamma)$-regular bipartite graph $\chi^{-1}(c)[U_{i,g},U_{j,h}]$ with $c \in [s]$ and $\lbrace ig,jh \rbrace \in \binom{I}{2}$ such that
$\min\{|U_{i,g}|,|U_{j,h}|\}\ge m_0$ or at least one of $U_{i,g},U_{j,h}$ is a vertex of $W$. 
Thus $\{ig,jh\}\in P^c$, that is, $\chi(e) \in \phi(ig,jh)$.
Therefore
\[
|\mathcal{C}| \leq \prod_{ij \in \binom{[r]}{2}}\prod_{\stackrel{gh \in [m_i]\times [m_j]}{\phi(ig,jh) \neq \emptyset}}|\phi(ig,jh)|^{|U_{i,g}|\,|U_{j,h}|}.
\]
Let us agree that $\log_2 0 := 0$.
Then
\begin{equation}\label{S6}
\log_2 |\mathcal{C}| \leq \sum_{ij \in \binom{[r]}{2}}\sum_{gh \in [m_i]\times[m_j]}|U_{i,g}|\,|U_{j,h}| \log_2|\phi(ig,jh)|.
\end{equation}

Let $T := [m_1]\times \ldots \times[m_r]$. We use $T$ to index all `transversal' $r$-tuples of parts from $\mathcal U$,  where we take one part from each of $X_1,\dots,X_r$. For each $\bm{t} = (t_1,\ldots,t_r)$ in $T$, define $\phi_{\bm{t}} : \binom{[r]}{2} \rightarrow 2^{[s]}$ by setting, for $ij\in \binom{[r]}{2}$,
\[
\phi_{\bm{t}}(ij) := \phi(it_i,jt_j).
\]
Recall the definition of $\bm{\alpha}$ after~(\ref{Fchange}).

\begin{claim}\label{numtuple}
$\log_2|\mathcal{C}| \leq (q^*+\sqrt{\gamma})n^2/2$, where
\[
q^* := \max \lbrace q(r,\phi_{\bm{t}},\bm{\alpha}) : (r,\phi_{\bm{t}},\bm{\alpha}) \in \textsc{feas}_1(\bm{k}),\, \bm{t} \in T \rbrace.
\] 
%(In particular, $(r,\phi_{\bm{t}},\bm{\alpha}) \in \feas_1(\bm{k})$ for at least one $\bm{t} \in T$.)
\end{claim}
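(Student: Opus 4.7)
My plan is to reinterpret the bound~\eqref{S6} as an expectation over a random transversal $\bm{t}\in T$. For each $i\in[r]$ equip $[m_i]$ with the probability distribution $p_i(g):=|U_{i,g}|/|X_i|$, and let $\bm{t}$ be drawn from the product measure on $T$. Since $\alpha_i\alpha_j p_i(g)p_j(h)=|U_{i,g}|\,|U_{j,h}|/n^2$, a direct computation gives
\[
\tfrac{n^2}{2}\,\mathbb{E}_{\bm{t}}\bigl[q(r,\phi_{\bm{t}},\bm{\alpha})\bigr] \;=\; \sum_{ij\in\binom{[r]}{2}}\sum_{g,h}|U_{i,g}|\,|U_{j,h}|\log_2|\phi(ig,jh)|,
\]
so~\eqref{S6} becomes $\log_2|\mathcal{C}|\le (n^2/2)\,\mathbb{E}_{\bm{t}}[q(r,\phi_{\bm{t}},\bm{\alpha})]$.

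Set $T_1:=\{\bm{t}\in T:(r,\phi_{\bm{t}},\bm{\alpha})\in\feas_1(\bm{k})\}$; the conditions $r<R(\bm{k})$ and $\bm{\alpha}\in\Delta^r$ hold automatically, so $\bm{t}\in T_1$ just means $\phi_{\bm{t}}\in\Phi_1(r;\bm{k})$. On $T_1$ I have $q(r,\phi_{\bm{t}},\bm{\alpha})\le q^*$ by definition, and the trivial bound $q\le\log_2 s$ holds throughout $T$, yielding
\[
\mathbb{E}_{\bm{t}}\bigl[q(r,\phi_{\bm{t}},\bm{\alpha})\bigr]\le q^* + (\log_2 s)\,\Pr[\bm{t}\notin T_1].
\]
It therefore suffices to show that the second term is at most $\sqrt{\gamma}$.

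The key structural step is to verify that $\phi_{\bm{t}}^{-1}(c)$ is $K_{k_c}$-free for every $\bm{t}\in T$ and every $c\in[s]$, so that $\bm{t}\notin T_1$ can arise only through some empty $\phi_{\bm{t}}(ij)$. Suppose indices $i_1,\dots,i_{k_c}$ span a clique in $\phi_{\bm{t}}^{-1}(c)$. Then every pair $(U_{i_\ell,t_{i_\ell}},U_{i_{\ell'},t_{i_{\ell'}}})$ is $(\eps,\gamma)$-regular in colour $c$, and by the definition of $P^c$ each side is either of size $\ge m_0$ or a singleton from $W$. The refinement of the partition by the attachments $\chi(x_h\cdot)$ for $h\in[w]$ forces the colour-$c$ density between a singleton side $\{x_i\}$ and any other cluster to lie in $\{0,1\}$; the density $\ge\gamma$ condition then makes it $1$, so every vertex of the other cluster is joined to $x_i$ in colour $c$. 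Fixing the singleton vertices first and applying the Embedding Lemma to the remaining clusters (all of size $\ge m_0$, pairwise $(\eps,\gamma)$-regular in colour $c$) produces a monochromatic $K_{k_c}$ in $\chi$, contradicting $\bm{k}$-validity.

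Finally, the same binary-density observation shows $\phi_{\bm{t}}(ij)\ne\emptyset$ whenever $\min\{i,j\}\le w$ (it contains the common colour $\chi(x_iy)$ for any $y\in U_{j,t_j}$). Hence in the union bound
\[
\Pr[\bm{t}\notin T_1]\le\sum_{ij\in\binom{[r]}{2}}\Pr[\phi_{\bm{t}}(ij)=\emptyset]
\]
only pairs with $i,j>w$ contribute, and for these $|X_i|,|X_j|>\eta^2 n$. Whenever $\phi(ig,jh)=\emptyset$, every edge of the complete bipartite graph $G[U_{i,g},U_{j,h}]$ lies in $E_0$ by definition, so together with Claim~\ref{badpairs} I obtain
\[
\Pr[\bm{t}\notin T_1]\le \frac{|E_0|}{(\eta^2 n)^2}\le \frac{s\gamma}{\eta^4},
\]
which is at most $\sqrt{\gamma}/\log_2 s$ by the hierarchy~\eqref{eq:constants}. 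The main obstacle is the Embedding Lemma step in the presence of singleton $W$-clusters; it is defused by the earlier refinement of the regularity partition, which forces each $(\eps,\gamma)$-regular pair with a singleton side to be a complete monochromatic bipartite graph, so no new regularity machinery is needed.
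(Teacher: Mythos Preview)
Your argument is essentially the paper's own proof, recast in probabilistic language: your product measure on $T$ with $p_i(g)=|U_{i,g}|/|X_i|$ is exactly the paper's weighting by $\prod_\ell |U_{\ell,t_\ell}|$ after normalising by $\prod_\ell |X_\ell|$, and your union-bound estimate for $\Pr[\bm{t}\notin T_1]$ reproduces the paper's bound~\eqref{T0} on $T_0$.

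There is one small slip in the $K_{k_c}$-freeness step. The assertion that ``by the definition of $P^c$ each side is either of size $\ge m_0$ or a singleton from $W$'' is not a direct consequence of the definition: membership in $P^c$ only requires that \emph{one} of the three listed alternatives holds for the pair. A cluster $U$ with $|U|<m_0$ and $U\not\subseteq W$ can still appear in a $P^c$-pair provided the other side lies in $W$. However, among the $k_c$ clique indices at most one such cluster can occur (two of them would fail the $P^c$ condition between themselves), so after fixing the $W$-singletons you have at most one non-$W$ cluster of size $<m_0$; pick any vertex from it and apply the Embedding Lemma to the remaining large clusters. The paper treats this case explicitly before passing to the general case.
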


\begin{claimproof}
We will first show that, for every $c\in [s]$ and $\bm{t} \in T$, the graph $\phi_{\bm{t}}^{-1}(c)$ is $K_{k_c}$-free. 
Indeed, suppose that $i_1, \ldots, i_{k_c}$ span a copy of $K_{k_c}$ in $\phi_{\bm{t}}^{-1}(c)$.
First consider the case when $U_{i_1,t_{i_1}}$ is not a vertex of $W$ but $|U_{i_1,t_{i_1}}| < m_0$.
Then, by the definition of $\phi$, we have that $U_{i_q,t_{i_q}}$ is a vertex of $W$ for all $2 \leq q \leq k_c$.
Moreover, for every $pq \in \binom{[k_c]}{2}$, every edge in $G[U_{i_p,t_{i_p}},U_{i_q,t_{i_q}}]$ is coloured with $c$ by $\chi$, a contradiction.

So, without loss of generality, we may assume that
there is some $0 \leq \ell \leq \min \lbrace k_c,w \rbrace$ such that each of $U_{i_1,t_{i_1}},\ldots,U_{i_\ell,t_{i_\ell}}$ consists of a vertex of $W$ and $|U_{i_q,t_{i_q}}| > m_0$ for all $\ell+1 \leq q \leq k_c$.
Then, by the definition of $\mathcal{U}$, we have that $\chi(e)=c$ for all $e \in G[U_{i_p,t_{i_p}},U_{i_q,t_{i_q}}]$ with $p \in [\ell]$ and $q \in [k_c]\setminus \lbrace p \rbrace$.
By the definition of $P^c\supseteq \phi^{-1}(c)$ and the
Embedding Lemma (that is, our choice of parameters at the beginning of the proof), for all $\ell+1 \leq q \leq k_c$, there is $z_q \in U_{i_q,t_{i_q}}$ such that together these vertices $z_q$ span a copy of $K_{k_c-\ell}$ in  $\chi^{-1}(c)$.
Then $\chi^{-1}(c)$ spans a copy of $K_{k_c}$,
contradicting the $\bm{k}$-validity of~$\chi$. This and the trivial bound  $r < R(\bm{k})$
imply that  $\phi_{\bm{t}} \in \Phi(r;\bm{k})$.
Therefore, for each $\bm{t} \in T$, we have that
$
(r,\phi_{\bm{t}},\bm{\alpha}) \in \feas_0(\bm{k})
$,
and so
\begin{equation}\label{Jeq}
\sum_{ij \in \binom{[r]}{2}}\alpha_i\alpha_j\log_2|\phi(it_i,jt_j)| \leq b(\bm{t}),
\end{equation}
where we define
\[
b(\bm{t}) = \begin{cases} 
q^*/2 &\mbox{if } (r,\phi_{\bm{t}},\bm{\alpha}) \in \feas_1(\bm{k}) \\
r^2\log_2 (s)/2 &\mbox{otherwise} \\
\end{cases}
\]
(i.e.~if $(r,\phi_{\bm{t}},\bm{\alpha}) \notin \feas_1(\bm{k})$ we take a (somewhat arbitrary) trivial bound for $b(\bm{t})$).
The claim will follow from taking a weighted average of~(\ref{Jeq}) by multiplying by $\prod_{\ell \in [r]}|U_{\ell,t_\ell}|$ and summing over all $\bm{t} \in T$.
First consider the right hand side of (\ref{Jeq}).
Let $T_0$ be the set of $\bm{t} \in T$ such that $\phi_{\bm{t}}(ij) = \emptyset$ for some $ij \in \binom{[r]}{2}$.
We will show that the sum of $\prod_{\ell \in [r]}|U_{\ell,t_\ell}|$ over all $\bm{t} \in T\setminus T_0$ is not much less than the sum taken over the whole of $T$.

To this end, fix a pair $\lbrace ig,jh \rbrace \in \binom{I}{2}$ such that $\phi(ig,jh) = \emptyset$.
If at least one edge $e$ in $G[U_{i,g},U_{j,h}]$ is not in $E_0$, then there is some $c \in [s]$ such that $e \in E^c$.
Then $\lbrace ig,jh \rbrace \in P^c$ and so $\phi(ig,jh) \ni c$ is non-empty, a contradiction.
Therefore $E(G[U_{i,g},U_{j,h}]) \subseteq E_0$.
Furthermore, by our definition of $\phi$, we have that $|X_i|,|X_j| \geq \eta^2n$.
Observe that, if one sums only over those $\bm{t} \in T$ that contain $\lbrace ig,jh \rbrace$, then one gets
\[
\sum_{\stackrel{\bm{t} \in T:}{t_i = g, t_j = h}}\prod_{\ell \in [r]}|U_{\ell,t_\ell}| = |U_{i,g}||U_{j,h}| \prod_{\ell \in [r]\setminus \lbrace i,j\rbrace}|X_\ell| \leq \frac{|U_{i,g}||U_{j,h}|}{\eta^4 n^2} \prod_{\ell \in [r]}|X_\ell|.
\]
Then, using the upper bound on $|E_0|$ from Claim~\ref{badpairs}, we have that
\begin{align}\label{T0}
\nonumber \sum_{\bm{t} \in T_0}\prod_{\ell \in [r]}|U_{\ell,t_\ell}| &\leq \sum_{\stackrel{\lbrace ig,jh \rbrace \in \binom{I}{2}:}{E(G[U_{i,g},U_{j,h}]) \subseteq E_0}} \sum_{\stackrel{\bm{t} \in T:}{t_i=g, t_j=h}}\prod_{\ell \in [r]}|U_{\ell,t_\ell}| \leq \frac{|E_0|}{\eta^4 n^2} \prod_{\ell \in [r]}|X_\ell|\\
&\leq \frac{s\gamma}{\eta^4} \prod_{\ell \in [r]}|X_\ell|.
\end{align}
We can now give an upper bound for the weighted average of the right hand of (\ref{Jeq}) as follows:
\begin{align}\label{RHS}
\nonumber \sum_{\bm{t} \in T} \prod_{\ell \in [r]}|U_{\ell,t_\ell}| b(\bm{t}) &\leq \frac{q^*}{2} \sum_{\bm{t} \in T} \prod_{\ell \in [r]}|U_{\ell,t_\ell}|+ \frac{r^2\log_2 s}{2} \sum_{\bm{t} \in T_0}\prod_{\ell \in [r]}|U_{\ell,t_\ell}|\\
 &\stackrel{(\ref{T0})}{\leq}  \prod_{\ell \in [r]}|X_\ell| \left( \frac{q^*}{2} + \frac{r^2 s\gamma \log_2 s}{2\eta^4} \right) \leq \prod_{\ell \in [r]}|X_\ell| \frac{q^* + \sqrt{\gamma}}{2}.
\end{align}
Using this bound together with a weighted average of the left hand side of~(\ref{Jeq}), we have that
\begin{align*}
&\quad~ \frac{q^* + \sqrt{\gamma}}{2} \prod_{\ell \in [r]}|X_\ell|\\
&\geq \sum_{\bm{t} \in T}   
 %\left( 
 \sum_{ij \in \binom{[r]}{2}}\alpha_i\alpha_j\log_2|\phi(it_i,jt_j)| 
 %\left( 
 \prod_{\ell \in [r]}|U_{\ell,t_\ell}| 
 %\right)
 %\right)
 \\
&= \sum_{ij \in \binom{[r]}{2}}\alpha_i\alpha_j
 %\left( 
\sum_{gh \in [m_i]\times[m_j]}|U_{i,g}|\,|U_{j,h}|\log_2|\phi(ig,jh)|  
 %\left( 
\sum_{\bm{t} \in T :\atop t_i = g,t_j=h}\ \prod_{\ell \in [r]\setminus\{i,j\}}|U_{\ell,t_\ell}|
 % \right)\right)
\\
&= \sum_{ij \in \binom{[r]}{2}} \frac{|X_i|}{n} \cdot \frac{|X_{j}|}{n} 
 %\left( 
\sum_{gh \in [m_i]\times[m_j]}|U_{i,g}|\,|U_{j,h}|\log_2|\phi(ig,jh)|  
 %\left(
 \prod_{ \ell \in [r]\setminus \lbrace i,j\rbrace}|X_\ell| 
 %\right) \right)
\\
&\stackrel{\mathclap{(\ref{S6})}}{\geq} \frac{1}{n^2}  \log_2 |\mathcal{C}| \prod_{ \ell \in [r]}|X_\ell|,
\end{align*}
proving Claim~\ref{numtuple}.
\end{claimproof}

\medskip
\noindent
Let $\bm{t}^* \in T$ be such that $q^* = q(r,\phi_{\bm{t}^*},\bm{\alpha})$.
Recall that $\bm{\beta} = (|Y_1|/N,\ldots,|Y_r|/N)$.
Then $(r,\phi_{\bm{t}^*},\bm{\alpha})$ and hence $(r,\phi_{\bm{t}^*},\bm{\beta})$ lies in $\feas_1(\bm{k})$. 
Now Claims~\ref{S5} and~\ref{numtuple} and Proposition~\ref{continuity} imply that
\begin{eqnarray*}
\frac{\log_2 F(H;\bm{k})}{N^2/2} &\stackrel{(\ref{Fchange})}{\leq}& \frac{\log_2 F(G;\bm{k})}{n^2/2} + \frac{\eta}{3} \leq \frac{5\eta}{6} + q^* + \sqrt{\gamma} < q(r,\phi_{\bm{t}^*},\bm{\alpha}) + \frac{6\eta}{7}\\
\nonumber &\leq& q(r,\phi_{\bm{t}^*},\bm{\beta}) + 2\log_2 (s) \|\bm{\alpha}-\bm{\beta}\|_1 + \frac{6\eta}{7} \stackrel{(\ref{L1diff})}{\leq} q(r,\phi_{\bm{t}^*},\bm{\beta}) + \eta,
\end{eqnarray*}
completing the proof of the lemma.\hfill$\square$

\section{Proofs of Theorems~\ref{optred} and~\ref{th:PartiteStructure}}\label{OptredProof}

%We will now use Lemma~\ref{mainlem} to prove Theorems~\ref{optred} and~\ref{th:PartiteStructure}.

\subsection{Proof of Theorem~\ref{optred}}
By Lemma~\ref{lm:lowerbd}, it suffices to show that for every $\eta > 0$, there exists $n_0 \in \mathbb{N}$ such that $\log_2 F(n;\bm{k}) \leq (Q(\bm{k})+\eta)n^2/2$ for all $n \geq n_0$.
Fix $\eta > 0$ and obtain $n_0$ from Lemma~\ref{mainlem}.
Now let $n \geq n_0$.
By Theorem~\ref{zykov}, there exists a complete multipartite graph $G$ on $n$ vertices with $F(G;\bm{k}) = F(n;\bm{k})$.
The required upper bound on $\log_2 F(G;\bm{k})$ follows immediately from Lemma~\ref{mainlem}.\hfill$\square$

\subsection{Proof of Theorem~\ref{th:PartiteStructure}}
Suppose that there is $\delta > 0$ which contradicts the claim.
We need the following claim, which uses a compactness argument to show that a triple in $\feas_1(\bm{k})$ which is almost optimal is in fact `close' to a $Q_1$-optimal triple.
%which implies that, to obtain a contradiction, it suffices to find a $\phi \in \Phi(r;\bm{k})$ such that $|\phi(ij)| \geq 1$ for all $ij \in \binom{[r]}{2}$, such that $q(r,\phi,\bm{\alpha})$ is very close to $Q(\bm{k})$.

\begin{claim}\label{compact}
There exists $\eta > 0$ such that for all $(r,\phi,\bm{\alpha}) \in \feas_1(\bm{k})$ with $q(r,\phi,\bm{\alpha}) \geq Q(\bm{k})-2\eta$, there is a $Q_1$-optimal triple $(r,\phi,\bm{\alpha}')$ such that $\|\bm{\alpha}' - \bm{\alpha}\|_1 \leq \delta$.
\end{claim}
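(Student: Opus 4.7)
The plan is to prove Claim~\ref{compact} by a standard compactness argument, exploiting the fact that the ``discrete part'' of a feasible solution (the pair $(r,\phi)$) ranges over a finite set, while the ``continuous part'' $\bm\alpha$ lives in a compact simplex on which $q$ depends continuously (by Proposition~\ref{continuity}, or simply by inspection of the formula~\eqref{q}).

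Suppose for contradiction that no such $\eta$ exists. Then for every $n\in\mathbb{N}$ there is a triple $(r_n,\phi_n,\bm{\alpha}_n)\in \feas_1(\bm{k})$ with $q(r_n,\phi_n,\bm{\alpha}_n)\ge Q(\bm{k})-2/n$, but such that every $Q_1$-optimal triple of the form $(r_n,\phi_n,\bm{\alpha}')$ satisfies $\|\bm{\alpha}'-\bm{\alpha}_n\|_1>\delta$. Since $r_n<R(\bm{k})$ is bounded and, for each admissible $r$, the set $\Phi_1(r;\bm{k})$ is finite, we may pass to a subsequence along which $r_n=r$ and $\phi_n=\phi$ are constant. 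The simplex $\Delta^r$ is compact, so after a further subsequence we may assume $\bm{\alpha}_n\to\bm{\alpha}^*\in\Delta^r$.

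By continuity of $q(r,\phi,\cdot)$ on $\Delta^r$, we have $q(r,\phi,\bm{\alpha}^*)=\lim_n q(r,\phi,\bm{\alpha}_n)=Q(\bm{k})$. Moreover, membership in $\feas_1(\bm{k})$ for a triple with fixed $(r,\phi)$ depends only on $(r,\phi)$ (since $\phi\in\Phi_1(r;\bm{k})$ is already guaranteed), so $(r,\phi,\bm{\alpha}^*)\in \feas_1(\bm{k})$ and is therefore $Q_1$-optimal. But then for all sufficiently large $n$ we have $\|\bm{\alpha}^*-\bm{\alpha}_n\|_1\le \delta$, contradicting the choice of the sequence and hence proving the claim.

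With Claim~\ref{compact} in hand, I would finish Theorem~\ref{th:PartiteStructure} as follows. Fix $\delta>0$ and let $\eta>0$ be given by the claim (shrinking $\eta$ further if necessary). Apply Lemma~\ref{mainlem} with parameter $\eta$ in place of $\eta$ to obtain $n_0$, and suppose $G$ is a complete multipartite graph of order $n\ge n_0$ with parts $V_1,\dots,V_r$ satisfying $F(G;\bm{k})\ge 2^{(Q(\bm{k})-\eta)n^2/2}$. Lemma~\ref{mainlem} yields $\phi\in\Phi_1(r;\bm{k})$ with $q(r,\phi,\bm{\alpha})\ge Q(\bm{k})-2\eta$ where $\bm{\alpha}=(|V_1|/n,\dots,|V_r|/n)$; indeed $F(G;\bm k)>0$ ensures the hypothesis of the lemma. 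Then Claim~\ref{compact} supplies a $Q_1$-optimal triple $(r,\phi,\bm{\alpha}')$ with $\|\bm{\alpha}-\bm{\alpha}'\|_1\le \delta$, as required. The only step requiring care is the compactness argument itself, and the key conceptual point is that feasibility in $\Phi_1(r;\bm{k})$ is a purely combinatorial condition on $(r,\phi)$ that is preserved under limits in $\bm\alpha$; this is what makes the passage from ``almost optimal'' to ``close to optimal'' work.
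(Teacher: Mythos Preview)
Your proof is correct and follows essentially the same compactness argument as the paper: suppose no $\eta$ works, extract a sequence of near-optimal triples far from every optimal one, pass to a convergent subsequence in the simplex, and use continuity of $q$ to reach a contradiction. If anything, your version is slightly more careful, since you explicitly pass to a subsequence on which the finitely-valued pair $(r,\phi)$ is constant before invoking compactness of $\Delta^r$, whereas the paper tacitly writes $(r,\phi,\bm{\alpha}_n)$ with fixed $(r,\phi)$ from the outset.
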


\begin{claimproof}
Suppose this is not the case.
Then for all $n \in \mathbb{N}$, there exists $(r,\phi,\bm{\alpha}_n) \in \feas_1(\bm{k})$ with
\begin{equation}\label{limitQ}
q(\phi,\bm{\alpha}_n) \geq Q(\bm{k}) - \frac{1}{n},
\end{equation}
but for all $\bm{\alpha}'_n \in \Delta^{r}$ with $\|\bm{\alpha}_n-\bm{\alpha}'_n\|_1 < \delta$, we have that $(r,\phi,\bm{\alpha}'_n)$ is not $Q_1$-optimal.

Consider the sequence $(\bm{\alpha}_1,\bm{\alpha}_2\ldots)$.
Since $\Delta^r$ is closed and bounded, the Heine-Borel theorem implies that it is compact.
Therefore there is some subsequence $(\bm{\alpha}_{n_1},\bm{\alpha}_{n_2},\ldots)$ of $(\bm{\alpha}_1,\bm{\alpha}_2,\ldots)$ which converges (in any norm, since $r$ is finite).
Let $\bm{\lambda} := \lim_{k \rightarrow \infty}\bm{\alpha}_{n_k}$.
Observe that $\bm{\lambda} \in \Delta^r$, so $(r,\phi,\bm{\lambda}) \in \feas_1(\bm{k})$.
Having fixed $r,\phi$, observe that $q(r,\phi,\bm{\lambda}) = 2\sum_{ij \in \binom{[r]}{2}}\lambda_i\lambda_j\log|\phi(ij)|$ is a continuous function of $\bm{\lambda}$.
Therefore
\[
\lim_{k \rightarrow \infty} q(r,\phi,\bm{\alpha}_{n_k}) = q(r,\phi,\bm{\lambda}).
\]
Together with~(\ref{limitQ}), this implies that $q(r,\phi,\bm{\lambda}) = Q(\bm{k})$, and so $(r,\phi,\bm{\lambda})$ is $Q_1$-optimal.
Now, since $\bm{\alpha}_{n_k} \rightarrow \bm{\lambda}$, we can choose $N \in \mathbb{N}$ such that $\|\bm{\alpha}_N-\bm{\lambda}\|_1 < \delta$.
This contradicts our assumption and hence proves the claim.
\end{claimproof}

\medskip
\noindent
Choose $\eta$ as in the claim.
Obtain $n_0 \in \mathbb{N}$ by applying Lemma~\ref{mainlem} with $\eta$.
Since we supposed that $\delta > 0$ contradicts the statement of Theorem~\ref{th:PartiteStructure}, there exists a complete multipartite graph $G$ on $n \geq n_0$ vertices such that $F(G;\bm{k}) \geq 2^{(Q(\bm{k})-\eta)n^2/2}$ and $G$ is a counterexample to the statement.
Let $V_1,\ldots,V_r$ be the parts of $G$ and define $\bm{\alpha} := (|V_1|/n,\ldots,|V_r|/n)$.
Then, for all $Q_1$-optimal triples $(r,\phi,\bm{\alpha}')$, we have that $\|\bm{\alpha}-\bm{\alpha}'\|_1 > \delta$.
Lemma~\ref{mainlem} and our assumption on $G$ imply that there exists $\phi \in \Phi_1(r;\bm{k})$ such that
\begin{equation}\label{Feq}
Q(\bm{k})-\eta \leq \frac{\log_2 F(G;\bm{k})}{n^2/2} \leq q(r,\phi,\bm{\alpha}) + \eta.
\end{equation}
Claim~\ref{compact} immediately gives a contradiction, completing the proof of the Theorem~\ref{th:PartiteStructure}.\hfill$\square$

\section{Concluding remarks}\label{conclusion}

The referee of this paper asked if the cases where $F(\bm{k})$ was determined in~\cite{abks} can be done using our optimisation problem. While the answer is in the affirmative, some claims from~\cite{abks} are more conveniently derived by working with graphs rather than feasible solutions. 
For example, following~\cite{abks} let us show that
 \begin{equation}\label{eq:sumLDL}
\sum_{\ell \in [s]} \ell d_\ell \leq s\left(1-\frac{1}{k-1}\right),
 \end{equation}
 where  $\bm{k} := (k,\ldots,k)$ has length~$s$, $(r,\phi,\bm{\alpha}) \in \feas_0(\bm{k})$ is an
arbitrary feasible solution, and  we define
$$
d_\ell := 2\sum_{\stackrel{ij \in \binom{[r]}{2}}{|\phi(ij)|=\ell}}\alpha_i\alpha_j,\quad \mbox{for $ß\ell\in [s]$}.
$$
%Then $q(\phi,\bm{\alpha}) = \sum_{\ell \in [s]}d_\ell \log_2 \ell$.
 The shortest way is probably to consider the graph $G_{\phi,\bm{\alpha}}(n)$ from the proof of Lemma~\ref{lm:lowerbd}. 
For $c\in [s]$, let $H_c$ be the subgraph of $G_{\phi,\bm{\alpha}}(n)$ spanned by pairs of parts $(X_i,X_j)$ such that $c \in \phi(ij)$.
Then $H_c$ is $K_k$-free for all colours $c \in [s]$ and so Tur\'an's theorem implies that $e(H_c) \leq (1-\frac{1}{k-1})n^2/2$.
Thus we have that,  as $n \rightarrow \infty$,
$$
\sum_{\ell \in [s]} \ell d_\ell = 2\sum_{c \in [s]}\sum_{\stackrel{ij \in \binom{[r]}{2}}{c\in\phi(ij)}}\alpha_i\alpha_j = 2\sum_{c \in [s]}\frac{e(H_c)+O(n)}{n^2} \leq s\left(1-\frac{1}{k-1}\right)+o(1),
$$
 which gives the claimed inequality~\eqref{eq:sumLDL}. Interestingly,~\eqref{eq:sumLDL} and  the trivial
constraints $d_\ell\ge 0$ for $\ell\in [s]$ imply the sharp upper bound on $q(\phi,\bm{\alpha}) =\sum_{\ell=1}^s d_\ell \log_2\ell$ when $s \in \lbrace 2,3\rbrace$ and when $\bm{k}=(4,4,4,4)$. (If $\bm{k}=(3,3,3,3)$, then an additional constraint, analogous to~\eqref{eq:sumLDL}, suffices to determine $Q(\bm{k})$, see~\cite{abks}.)

Unfortunately, the problem of (numerically) solving Problem~$Q_2$ seems
rather difficult even for moderately small
$\bm{k}$. If we have a candidate pair $(r,\phi)$, then the 
Lagrange Multiplier Method gives a linear program which either returns a best
possible $\bm{\alpha}$ for this $(r,\phi)$ in the interior of $\Delta^r$, or
implies that there is an optimal solution on the boundary so we can reduce $r$ by one.
This calculation
can be efficiently implemented. However, the number of possible pairs $(r,\phi)$ becomes large very quickly. Here, the quest of replacing the crude bound $r<R(\bm{k})$ by a better one leads to the following Ramsey-type question. Namely, $r$ can be bounded by $R_2(\bm{k})-1$, where
we define $R_2(\bm{k})$ to be the smallest $r$ such that for every choice of a
\emph{list-colouring} $\phi: {[r]\choose 2}\to {[s]\choose 2}$
there is $c\in [s]$ with $\phi^{-1}(c)$ containing a $k_c$-clique.
Clearly, the definition would not change if we restrict ourselves to
lists of size at least $2$, so we can assume $r< R_2(\bm{k})$ in the statement
of Problem~$Q_2$.
The problem of estimating $R_2(\bm{k})$ runs into similar difficulties as those for the classical version $R(\bm{k})$. 
It is a special case of a parameter studied in~\cite{setcol}, and seems to grow fast.
For example, in~\cite{setcol} it was shown that $R_2(5,5,5) \geq 20$, which is already too large for a na\"{i}ve enumeration of feasible $\phi$ by computer.

As we mentioned, the existence of the limit in~\eqref{eq:lim} can be shown
by an easy modification of the proof for the case $k_1=\dots=k_s$ in~\cite{abks}. In fact, there are two different proofs. 
The one that appears in the published version of~\cite{abks} was suggested
by an anonymous referee and uses an entropy inequality of Shearer
to show that $\log F(n;\bm{k})/n^2$ is a non-increasing function of~$n$. 

The other proof, which was the original argument by Alon et al~\cite{abks}, is
similar to our proof of Theorem~\ref{optred}. In our language, it can be sketched as follows. Fix a large $N$ such that $\log_2 F(N;\bm{k})/{N\choose 2}$ is close to the limit superior of~\eqref{eq:lim}. Take an $\eps$-regular partition $V(G)=V_1\cup \dots\cup V_m$ of an
arbitrary $\bm{k}$-extremal order-$N$ graph $G$ with a `typical' colouring $\chi$. Let $\phi(ij)$ be
the set of those colours $c\in[s]$ for which $\chi^{-1}(c)[V_i,V_j]$ is an $(\eps,\gamma)$-regular pair. As in Lemma~\ref{lm:lowerbd}, use this function $\phi:{[m]\choose 2}\to 2^{[s]}$ with the uniform vector $\bm{\alpha}=(1/m,\dots,1/m)$ to produce
graphs of order $n\to\infty$ with at least $2^{q(m,\phi,\bm{\alpha})n^2/2-O(n)}$
valid colourings. Since $q(m,\phi,\bm{\alpha})$ can be made arbitrarily close to the limit superior of~\eqref{eq:lim} by choosing small $\gamma\gg\eps\gg1/N$, the limit in~\eqref{eq:lim} exists.

The latter proof can be adopted to prove~Theorem~\ref{optred}
(by applying symmetrisation to reduce the triple $(m,\phi,\bm{\alpha})$
to one with fewer than $R(\bm{k})$ parts). 
 However, our proof (where the Regularity Lemma is applied 
after the symmetrisation) has the advantages of giving some explicit
(although rather bad) bound on the rate of convergence in~\eqref{eq:lim} and implying Theorem~\ref{th:PartiteStructure}
as well.

%Theorem~\ref{optred} shows that the maximum value $Q(\bm{k})$ of Problem~($\star$) asymptotically determines $\log F(n;\bm{k})$ for large $n$.

%We plan to further address the `stability' of $\bm{k}$-extremal graphs in a forthcoming work; that is, to describe the asymptotic structure of every graph $G$ on $n$ vertices (as well as typical $\bm{k}$-valid colourings of $G$) with $F(G;\bm{k})  = F(n,\bm{k}) \cdot 2^{o(n^2)}$ from the set of optimal solutions to $Q(\bm{k})$.
%Unfortunately, the proofs are long and technical so we do not present them in this paper. 

Despite Theorem~\ref{th:PartiteStructure}, there may be order-$n$ graphs $G$ with $F(G;\bm{k})=2^{(Q(\bm{k})+o(1))n^2/2}$ which are very far in edit distance from being complete multipartite. For example, if $\bm{k}=(4,3)$, 
%(when $Q(\bm{k})=1/2$ is given by the unique feasible vector corresponding to
%$K_{n/2,n/2}$), 
then one can take for $G$ an equitable complete bipartite 
graph with parts $A\cup B$ and add any triangle-free graph into $A$
(e.g.\ a blow-up of a pentagon which is far from being complete partite). Here, we can colour edges between $A$ and $B$ arbitrarily provided all edges inside $A$ have colour $1$. Thus
$F(G;(4,3))\ge 2^{|A|\,|B|} = 2^{\frac{1}{2}\binom{n}{2}+O(n)}$, while $Q((4,3))$ is easily seen to be equal to $1/2$.

Interestingly, our follow-up results (in preparation) show that all $(4,3)$-extremal graphs of sufficiently large order $n$ happen to be in fact 3-partite. For example, if $n=2m+1$ is odd (and large), then the unique extremal graph is $K_{m,m-1,2}$. In order to illustrate how a small part 
can increase the number of colourings, let us show that 
 \begin{equation}\label{eq:43}
 F(K_{m,m,1};(4,3))\ge 2\cdot 2^{m(m+1)}-2^{m^2},
 \end{equation}
 that is, the number of $(4,3)$-valid colourings of $H := K_{m,m,1}$ is by factor $2-o(1)$
larger than that for the Tur\'an graph $K_{m+1,m}$.
If $H$ has parts $V_1\cup V_2\cup V_3$ with $|V_3|=1$, then $H$ has 
$2^{m(m+1)}$ colourings where $G[V_1\cup V_3,V_2]$ is coloured arbitrarily 
while all edges between $V_1$ and $V_3$ have colour $1$. Similarly we have 
$2^{m(m+1)}$ colourings where $V_3$ is `bundled' with $V_2$ (and all edges 
between $V_2$ and $V_3$ get colour $1$). All colourings that appear twice
are exactly those that assign colour 1 to all edges incident to $V_3$, so there are $2^{|V_1|\,|V_2|}=2^{m^2}$ of them, giving~\eqref{eq:43}.

The above example shows that one can have parts of size $o(n)$ in Theorem~\ref{th:PartiteStructure} even for $\bm{k}$-extremal graphs. (These parts will correspond to zero entries of $\bm{\alpha}$ in the limit.)
%Theorem~\ref{zykov} shows that for every $\bm{k}$ and $n$, there is a complete multipartite graph of order $n$ which is $\bm{k}$-extremal. 
Nonetheless, we conjecture that Theorem~\ref{zykov} captures all extremal graphs:

\begin{conjecture}\label{cj:extr}
For every $n,s \in \mathbb{N}$ and $\bm{k} \in \mathbb{N}^s$, every $n$-vertex $\bm{k}$-extremal graph is complete multipartite.
\end{conjecture}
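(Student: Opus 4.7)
The plan is to extract more from the symmetrisation argument in the proof of Theorem~\ref{zykov}. If $G$ is $\bm{k}$-extremal and $u,v$ are distinct non-adjacent vertices, then that proof already yields
\[
\sum_{\chi \in \mathcal{F}(G-\{u,v\})}(\chi_u-\chi_v)^2 = 0,
\]
so $\chi_u = \chi_v$ for every $\bm{k}$-valid colouring $\chi$ of $G-\{u,v\}$ (here $\chi_u$ is the number of $\bm{k}$-valid extensions of $\chi$ to $G-\{v\}$). Hence the conjecture reduces to the following rigidity claim, which I would establish separately: \emph{if $G$ is $\bm{k}$-extremal and $u,v$ are non-adjacent vertices of $G$ with $\chi_u = \chi_v$ for every $\chi \in \mathcal{F}(G-\{u,v\})$, then $N(u) = N(v)$.}

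To prove the rigidity claim, I would argue contrapositively: assume $w \in N(u) \setminus N(v)$ (without loss of generality) and construct a specific $\chi$ at which $\chi_u \neq \chi_v$. The strategy is to exploit the richness of an extremal $G$ to produce an asymmetry in the local constraints faced at $u$ versus $v$. Concretely, one chooses $\chi$ so that some $K_{k_1-1}$ of $G-\{u,v\}$ through $w$ is entirely coloured $1$, with no analogous colour-$1$ constraint arising near $N(v)$; then any extension at $u$ is forbidden from colouring $uw$ with colour $1$, whereas no corresponding restriction constrains the extensions at $v$. One then peels off the edges of $N(u) \triangle N(v)$ one at a time, tracking how the count changes; in the simplest subcase $N(v) \subseteq N(u)$, a monotonicity argument along these lines should force $\chi_u < \chi_v$ for some $\chi$.

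The main obstacle is that compensating constraints elsewhere in $G-\{u,v\}$ could in principle equalise $\chi_u$ and $\chi_v$ for every $\chi$: in sparse non-extremal graphs (for instance a four-vertex path on $u, w, x, v$) such accidental cancellations genuinely occur, so any argument must use the extremality of $G$ quantitatively rather than structurally alone. One promising direction is to view $\chi \mapsto \chi_u$ as a polynomial in the colour indicators on the edges of $G-\{u,v\}$ and show that the polynomial identity $\chi_u \equiv \chi_v$ forces $N(u) = N(v)$, via a Combinatorial-Nullstellensatz-type rigidity analysing the top-degree contributions coming from cliques supported in $N(u) \triangle N(v)$. A complementary approach for large $n$ is to combine the rigidity analysis with Theorem~\ref{th:PartiteStructure}: the stability result pins the part ratios of an extremal $G$ near a $Q_1$-optimal triple, so $G$ differs from a complete multipartite graph in at most $o(n^2)$ edges, and a local swap argument in the style of~\cite{abks,PY} should then close the gap exactly. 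Small $n$ would likely require a separate, possibly computer-assisted, treatment.
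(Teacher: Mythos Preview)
This statement is a \emph{conjecture} in the paper, not a theorem: the authors do not prove it and explicitly defer even a conditional version to future work. There is therefore no proof in the paper to compare your attempt against.

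Evaluated on its own, your proposal is a plan rather than a proof. The first step---deducing $\chi_u=\chi_v$ for every $\chi\in\mathcal{F}(G-\{u,v\})$ whenever $u,v$ are non-adjacent in an extremal $G$---is correct and is already contained in the proof of Theorem~\ref{zykov}. The reduction to your ``rigidity claim'' is the natural next move. But you do not prove that claim; each of the three suggested routes remains a heuristic. For the ``construct a specific $\chi$'' idea you give no mechanism by which extremality prevents the compensating cancellations that you yourself exhibit with the $P_4$ example; saying one ``peels off edges of $N(u)\triangle N(v)$'' and that monotonicity ``should'' force $\chi_u<\chi_v$ is not an argument. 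The Combinatorial-Nullstellensatz direction is not made precise: $\chi_u$ and $\chi_v$ are functions on the set of \emph{valid} colourings of $G-\{u,v\}$, not on an affine space over a field, and it is unclear what polynomial identity over what ring is being asserted or how one would isolate ``top-degree contributions from cliques supported in $N(u)\triangle N(v)$''. The stability-plus-local-swap route addresses only large $n$, and the analogous exact results in~\cite{abks,PY} depend on first solving Problem~$Q$ for the particular $\bm{k}$ at hand; you supply no uniform substitute valid for all $\bm{k}$. In short, you have correctly isolated where the difficulty lies but have not overcome it---which is consistent with the statement being left open in the paper.
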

 
In a future paper, we hope to provide a sufficient condition for this to be true for all $n\ge n_0(\bm{k})$ and apply the developed theory to solving the problem for new values of~$\bm{k}$. 
We note that, in the different setting of forbidden cliques with prescribed colour patterns explored in~\cite{bhs}, the corresponding version of Conjecture~\ref{cj:extr} holds in some cases.

\end{document}